\documentclass{amsart}
\usepackage{enumerate}
\usepackage{tikz}
\usetikzlibrary{automata,positioning}
\usepackage{hyperref}
\usepackage{mathrsfs}
\usepackage{amsfonts}
\usepackage{latexsym}
\usepackage{epsfig}
\usepackage{indentfirst}
\graphicspath{{figures/}}
\usepackage{amsmath}
\usepackage{amsthm}
\usepackage{graphicx}
\usepackage{amssymb}
\usepackage{amscd}
\usepackage{latexsym}
\usepackage{color}
\DeclareGraphicsExtensions{.eps}
\input xy
\xyoption{all}

\newtheorem{thm}{Theorem}[section]
\newtheorem{lem}[thm]{Lemma}

\newtheorem{prop}[thm]{Proposition}

\theoremstyle{definition}
\newtheorem{defn}[thm]{Definition}

\newtheorem{que}[thm]{Question}

\theoremstyle{remark}

\numberwithin{equation}{section}

\numberwithin{equation}{section} \makeatletter

\begin{document}

\title[Iterated Mazur pattern satellite knots]{An infinite-rank summand from iterated Mazur pattern satellite knots}
\author{Wenzhao Chen}
\address{Max Planck Institute of Mathematics, Vivatsgasse 7, 53111 Bonn, Germany}
\email{chenwenz@msu.edu}
\thanks{}


\keywords{}

\begin{abstract}
We show there exists a topologically slice knot $K$ such that the knots $\{M^n(K)\}_{n=0}^\infty$ obtained by iterated satellite operations by the Mazur pattern span an infinite-rank summand of the smooth knot concordance group. This answers a question raised by Feller-Park-Ray. 
\end{abstract}
\maketitle
\section{Introduction}
A knot $K$ in the 3-sphere is called \emph{smoothly slice} if it bounds a smoothly embedded disk in the 4-ball. Two oriented knots $K_0$ and $K_1$ are \emph{smoothly concordant} if $K_0\# -K_1$ is smoothly slice, where $-K_1$ is the mirror image of $K_1$ with reversed orientation. With the binary operation induced by the connected sum, the set of concordance classes of knots form an abelian group $\mathcal{C}$ called the \emph{smooth knot concordance group}. One can similarly define \emph{topologically slice knots} by using locally flat and topologically embedded disks, and define \emph{exotically slice knots} by using 4-balls endowed with possibly exotic smooth structures. Correspondingly, we have the \emph{topological knot concordance group} $\mathcal{C}_{top}$ and the \emph{exotic knot concordance group} $\mathcal{C}_{ex}$.


The \emph{satellite operation} is a method of constructing knots and induces self-maps of the aforementioned knot concordance groups. Let $P$ be an oriented knot in the solid torus $S^1\times D^2$. For any knot $K$, the knot $P(K)$ is the image of $P$ under a homeomorphism from $S^1\times D^2$ to a regular neighborhood of $K$ that identifies $S^1\times \{*\in\partial D^2\}$ with a Seifert longitude of $K$. $P(K)$ is called a satellite knot with pattern $P$ the and companion $K$. 

Throughout the paper, we will let $M$ denote the Mazur pattern; see Figure \ref{Figure, the Mazur pattern}. The primary focus of this paper is to answer a question raised by Feller-Park-Ray regarding iterated Mazur pattern satellite knots. 
\begin{que}[Question 1.7 of \cite{FPR19}]\label{question by Feller-Park-Ray}
Does there exist a knot $K$ such that $\{M^n(K)\}_{n=0}^{\infty}$ are linearly independent in the smooth knot concordance group $\mathcal{C}$, or better, form a basis for an infinite-rank summand of $\mathcal{C}$? If yes, can $K$ be chosen to be topologically slice?
\end{que}  

It is known that $M^n$ induce distinct operators on $\mathcal{C}_{ex}$ and hence on $\mathcal{C}$. Ray proved this by showing there exists a knot $K$ such that $\{M^n(K)\}_{n=0}^{\infty}$ have distinct Ozsv\'ath-Szab\'o $\tau$-invariants \cite{MR3391056}. Moreover, the Mazur pattern is a strong winding-number-one pattern, which induces an injective map on $\mathcal{C}_{ex}$ \cite{MR3286894}. Ray's result hence suggests the operators $M^n$ provide a fractal structure on $\mathcal{C}_{ex}$ (i.e.\ $M^n$ are ever-shrinking injective maps) \cite{MR2836668}. Later Levine improved this fractal analogy by showing the images of $M^n$ are strictly decreasing as $n$ increases \cite{Lev16}. Question \ref{question by Feller-Park-Ray} further asks whether $M^n$ give rise to linearly independent operators on $\mathcal{C}$. Our main theorem is an affirmative answer to Question \ref{question by Feller-Park-Ray}. 
\begin{figure}[htb!]
\begin{center}
\includegraphics[scale=0.40]{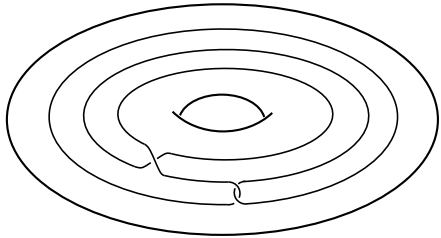}
\caption{The Mazur pattern $M$.}\label{Figure, the Mazur pattern}
\end{center}
\end{figure}
\begin{thm}\label{Main theorem}
Let $D$ be the Whitehead double of the right-hand trefoil, and let $D_2=D\#D$. Then the knots $\{M^n(D_2)\}_{n=1}^{\infty}$ form a basis of an infinite-rank summand of the smooth knot concordance group. 
\end{thm}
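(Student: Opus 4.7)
My plan is to construct, for each $n \ge 1$, a concordance homomorphism $\phi_n : \mathcal{C} \to \mathbb{Z}$ so that the infinite matrix $(\phi_n(M^k(D_2)))_{n,k \ge 1}$ is triangular with nonzero diagonal entries. Such a family simultaneously witnesses linear independence of $\{M^n(D_2)\}_{n \ge 1}$ and provides a splitting surjection $\mathcal{C} \to \bigoplus_{n \ge 1} \mathbb{Z}$ onto the subgroup they generate, which is precisely the summand statement. The preliminary reduction is to verify that each $M^n(D_2)$ is topologically slice: $D$ has trivial Alexander polynomial, and because $M$ has winding number one with $M(U) = U$, iterated Mazur satellites of $D_2$ also have trivial Alexander polynomial, so topological sliceness follows from Freedman's theorem.

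The analytic heart of the argument will be a satellite formula computing $\mathit{CFK}^\infty(M(K))$ in terms of $\mathit{CFK}^\infty(K)$, in a form amenable to iteration. I would establish or invoke such a formula (likely via bordered or immersed-curve technology tailored to winding-one patterns) and read off its effect on companions of the type $M^n(D_2)$. The expected pattern is that for a companion $K$ with $\tau(K)>0$ and $\epsilon(K)=1$, applying $M$ attaches a new ``staircase-like'' summand of controlled slope to $\mathit{CFK}^\infty(K)$ while leaving the earlier summands structurally intact. Iterating then exhibits $\mathit{CFK}^\infty(M^n(D_2))$ as $\mathit{CFK}^\infty(D_2)$ with $n$ additional staircase layers stacked on, each labelled by its own characteristic slope parameter depending on $n$.

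With this structural picture in hand, the homomorphisms $\phi_n$ can be extracted from a family of concordance invariants sensitive to staircase slope, for example the $\Upsilon_K(t)$ function evaluated at well-chosen values of $t$, the $\varphi_i$ invariants of Dai--Hom--Stoffregen--Truong coming from involutive knot Floer homology, or secondary numerical invariants tailored to the staircase decomposition. Selecting for each $n$ an invariant calibrated to detect precisely the slope introduced by the $n$th Mazur layer produces the required triangular matrix. The main obstacle I foresee is the iteration step: satellite formulas for winding-one patterns are delicate, and composing them $n$ times while retaining enough control to verify that the new layer at step $n$ is registered by $\phi_n$ and unseen by $\phi_1,\dots,\phi_{n-1}$ will be the technical crux of the argument.
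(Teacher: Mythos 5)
Your high-level framework is right and overlaps with the paper: the proof does reduce to showing that the Dai--Hom--Stoffregen--Truong homomorphisms $\varphi_i$ produce a triangular matrix on $\{M^n(D_2)\}_{n\ge 1}$, with the paper ultimately establishing $\varphi_i(M^n(D_2))=1$ for $i\in\{1,n+1\}$ and $0$ otherwise, so that $\oplus_{i\ge 2}\varphi_i$ splits off the desired $\oplus\mathbb{Z}$ summand. (Minor attribution slip: the $\varphi_i$ here come from knot Floer homology over $\mathcal{R}=\mathbb{F}[U,V]/(UV)$, not from involutive knot Floer homology.) Your observation on topological sliceness is also correct, though it is directed at the question rather than at the theorem as stated.

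The genuine gap is in the computational core, and it is not a small one. You propose to derive and iterate a satellite formula computing $CFK^\infty(M(K))$ from $CFK^\infty(K)$, expecting each application of $M$ to stack on a controlled staircase layer. The paper explicitly flags that this is not viable: the bordered bimodule for $S^1\times D^2\setminus M$ is large (cf.\ Levine), so tracking the full complex through $n$ iterations is intractable, and in any case the local equivalence class one finds, $C(1,-n-1,n+1,-1)$, is not a stack of $n$ independent staircase summands but a single four-step ``box''-type complex whose parameters depend on $n$. The paper's actual route is a constrain-then-obstruct argument, run by induction on $n$: it computes only the hat-flavored complex $\widehat{CFK}(M(K))$ via the immersed-curve pairing, extracts from it a ``characteristic multi-set'' that constrains which standard complex $C(a_1,\ldots,a_k)$ can represent the local equivalence class of $CFK_\mathcal{R}(M(K))$ (Lemmas 3.5 and 3.6), and then eliminates all remaining candidates except $C(1,-n-1,n+1,-1)$ using realizability obstructions — certain standard complexes cannot arise from $\partial^2=0$ for any honest knot (Lemma 3.7, in the spirit of Hom). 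Without this ``partial $\widehat{CFK}$ information plus non-realizability'' mechanism, your proposal has no concrete way to pin down the local equivalence class, and hence no way to actually evaluate the $\varphi_i$ needed for the triangular matrix.

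A smaller issue: floating $\Upsilon_K(t)$ as an alternative candidate is risky here, since $\Upsilon$ is not obviously determined by the $\mathcal{R}$-local equivalence class and there is no reason the complexes $C(1,-n-1,n+1,-1)$ should produce $\Upsilon$ functions that cleanly yield a summand rather than mere linear independence. The $\varphi_i$ are engineered precisely to read off the entries of the standard complex, which is why they give the summand statement directly.
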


The invariants we use to prove Theorem \ref{Main theorem} are the $\varphi_i$-invariants due to Dai-Hom-Stoffregen-Truong, which are defined using a version of knot Floer homology $CFK_\mathcal{R}$ over the ring $\mathcal{R}=\mathbb{F}[U,V]/(UV)$. This version of knot Floer homology is equivalent to the bordered Heegaard Floer invariant $\widehat{CFD}$ of the knot complement \cite{LOT18, hanselman2019cabling}. Hence in theory one can compute these invariants for $M^n(D_2)$ using bimodules in bordered Heegaard Floer homology \cite{lipshitz2015bimodules}. However, the bimodule of $S^1\times D^2-M$ is quite big and hence makes this approach very involved; see Theorem 3.4 of \cite{Lev16} for the bimodule of $S^1\times D^2-M$. Instead, we partially compute the hat-version knot Floer chain complex of $M^n(D_2)$ using the immersed-curve techniques developed in \cite{chen2019knot}, and use it to constrain possible local equivalence classes for $CFK_\mathcal{R}(M^n(D_2))$, and finally, we determine the local equivalence classes using obstructions for knot Floer chain complexes to be realized by knots. This in turn determines the $\varphi_i$-invariants.

%

Intuitively speaking, general satellite operations are quite different from the connected sum and hence often disregard the group structure of $\mathcal{C}$. This feature has been exploited widely to construct linearly independent knots in various context (e.g.\ \cite{CFHH13, MR2496049,DHST19,HKL16,MR3784230}). It is hence reasonable to expect Theorem \ref{Main theorem} to be true, and it is even tempted to believe general iterated satellite knots are independent. As partial evidence to this expectation, the author proved in \cite{ChenThesis19} that if we fix a slice pattern with winding number greater than or equal to two, or certain winding-number-one patterns including the Mazur pattern, then there exists a topologically slice companion knot such that infinitely many knots of the iterated satellites are linearly independent. Despite having such an intuition, many interesting questions on iterated satellite knots remain open and serve as strong testing grounds for our ability to understand knot concordance. For example, are there linearly independent iterated Mazur pattern satellite knots in the topological category? Are the iterated Whitehead doubles independent \cite{MR3749499}? For any winding-number-zero pattern $P$ which induces a non-constant map on $\mathcal{C}$, are the associated graded groups of the \emph{$P$-filtration} $\mathcal{C}\supseteq\langle \text{Im}P \rangle\supseteq \langle \text{Im}P^2 \rangle \supseteq \cdots$ always of infinite rank \cite{hedden2018satellites}? Perhaps more classically, if we allow the companion and pattern knots to vary, are algebraic knots linearly independent \cite{Rud76}?  
\paragraph*{\textbf{Organization.}}Section \ref{Section, Preliminary} reviews the necessary background of Heegaard Floer homology needed for this paper. The proof of the main theorem is given in Section \ref{Subsection, main argument modulo a technical lemma}, modulo a computational result which is proved in Section \ref{Subsection, computational result}.

\paragraph*{\textbf{Acknowledgment}} This work would not be possible without the encouragement of Matt Hedden. I also thank Arunima Ray for helpful conversations. The author is grateful to the Max Planck Institute for Mathematics in Bonn for its hospitality and financial support.

\section{Preliminary}\label{Section, Preliminary} 
\subsection{The knot Floer chain complexes}
The knot Floer homology refers to a package of knot invariants introduced by Ozsv\'ath-Szab\'o \cite{OS04a} and independently by Rasmussen \cite{Ras03}. In this subsection we briefly recall the algebraic aspect of the knot Floer homology package, following the convention in \cite{DHST19} and \cite{MR3950650}. For further details and other aspects of this theory, we refer the interested readers to the aforementioned papers as well as survey papers \cite{MR3604497, MR3591644, MR3838884} .

For any knot $K$ in $S^3$ together with some choice of auxiliary data $\mathfrak{d}$, the knot Floer chain complex $\mathcal{CFL}^{-}(K,\mathfrak{d})$ is a $\mathbb{Z}\oplus\mathbb{Z}$-graded, freely and finitely generated chain complex over $\mathbb{F}[U,V]$, where the bigrading is compatible with the 
$\mathbb{F}[U,V]$-module structure: Let $\mathfrak{S}$ denote a set of generators over $\mathbb{F}[U,V]$ which is homogeneous with respect to the bigrading $gr=(gr_U, gr_V)$. Then the bigrading on $\mathcal{CFL}^{-}(K)$ is determined by $gr:\mathfrak{S}\rightarrow\mathbb{Z}\oplus\mathbb{Z}$ together with the rules that multiplication by $U$ is $(-2,0)$ graded, and multiplication by $V$ is $(0,-2)$ graded.

Different choices of the auxiliary data give rise to homotopy equivalent bigraded chain complexes over $\mathbb{F}[U,V]$. Therefore, we simply denote the knot Floer chain complex by $\mathcal{CFL}^{-}(K)$, understood up to chain homotopy equivalence.

The grading $gr_U$ is also called \emph{the Maslov grading}, and we define \emph{the Alexander grading} $A(\cdot):=\frac{1}{2}(gr_U(\cdot)-gr_V(\cdot))$.

We need two variations of $\mathcal{CFL}^{-}(K)$. To define the first variation, let $\mathcal{R}=\mathbb{F}[U,V]/(UV)$. Define $CFK_\mathcal{R}(K)$ to be $\mathcal{CFL}^{-}(K)\otimes_{\mathbb{F}[U,V]}\mathcal{R}$, which inherits the bigrading from $\mathcal{CFL}^{-}(K)$. The second variation is the hat-version knot Floer chain complex $\widehat{CFK}(K)$. It is a $\mathbb{Z}$-filtered, $\mathbb{Z}$-graded chain complex over $\mathbb{F}$. Up to filtered chain homotopy equivalence, it can be obtained from $CFK_\mathcal{R}(K)$ via the following procedure: Let $\{x_i\}$ be a homogeneous, filtered basis for $CFK_\mathcal{R}(K)$ over $\mathcal{R}$. The underlying vector space for $\widehat{CFK}(K)$ space is generated by $\{x_i\}$ over $\mathbb{F}$ and the differential is obtained from the differential of $CFK_{\mathcal{R}}(K)$ by setting $U=0$ and $V=1$. The elements in $\{x_i\}$ inherit the Maslov grading and the Alexander grading from $CFK_\mathcal{R}(K)$. The $\mathbb{Z}$-grading on $\widehat{CFK}(K)$ is the Maslov grading. The $\mathbb{Z}$-filtration $\cdots \subseteq\mathcal{F}_d\subseteq \mathcal{F}_{d+1}\subseteq \cdots$ on $\widehat{CFK}(K)$ is given by $\mathcal{F}_d=Span_\mathbb{F}\{x_i \vert A(x_i)\leq d\}$.
\subsection{The Dai-Hom-Stoffregen-Truong concordance homomorphisms $\varphi_j$}
In this subsection, we briefly introduce the knot concordance homomorphisms $\varphi_j$ due to Dai-Hom-Stoffregen-Truong, defined using $CFK_{\mathcal{R}}(K)$. We begin with recalling that $CFK_\mathcal{R}(K)$ belongs a class of $\mathcal{R}$-complexes called \emph{knot-like complexes}.
\begin{defn}[Definition 3.1 of \cite{DHST19}]
A bigraded, free, finitely generated chain complex $(C,gr_U,gr_V)$ over $\mathcal{R}$ is called a knot-like complex if
\begin{itemize}
\item[(1)]$H_*(C/U)/V-torsion$ is isomophic to $\mathbb{F}[V]$ and is supported in $gr_U=0$.
\item[(2)]$H_*(C/V)/U-torsion$ is isomophic to $\mathbb{F}[U]$ and is supported in $gr_V=0$.
\end{itemize}
\end{defn}
\emph{Standard complexes} are knot-like complexes of particular interest to us. To define them, first recall an $\mathcal{R}$-complex $(C_{\mathcal{R}}, \partial)$ is called \emph{reduced} if $\partial \equiv 0 \mod \ (U,V)$, and in this case we decompose its differential $\partial=\partial_U+\partial_V$, where $\partial_U(x) \in \text{Im} U$ and $\partial_V(x)\in \text{Im} V$ for any $x\in C_{\mathcal{R}}$.
\begin{defn}[Definition 4.3 of \cite{DHST19}]
Let $n\in 2\mathbb{N}$. Given $n$ nonzero integers $(b_1,\ldots,b_n)$, the standard complex $C(b_1,\ldots,b_n)$ is the reduced complex freely generated over $\mathcal{R}$ by $\{x_0,x_1,\ldots,x_n\}$ as an $\mathcal{R}$-module and with differentials as follows. For $i$ odd, 
$$
\begin{aligned}
\partial_U x_{i-1}= U^{|b_i|}x_i \ if\ b_i<0\\
\partial_U x_{i}= U^{b_i}x_{i-1} \ if\ b_i>0,
 \end{aligned}
$$
and for $i$ even, 
$$
\begin{aligned}
\partial_V x_{i-1}= V^{|b_i|}x_i \ if\ b_i<0\\
\partial_V x_{i}= V^{b_i}x_{i-1} \ if\ b_i>0.
 \end{aligned}
$$
\end{defn}
 
In \cite{DHST19}, an equivalence relation called \emph{local equivalence} is defined for knot-like complexes. We will not need the precise definition of local equivalence. Instead, we recall the following important result. 
\begin{thm}[Theorem 6.1 and Corollary 6.2 of \cite{DHST19}]\label{Theorem, every knot-like complex has a standard representative}
Every knot-like complex $C$ is locally equivalent to a standard complex $C(a_1,\ldots, a_n)$, and in this case $C$ is homotopic equivalent to $C(a_1,\ldots, a_n)\oplus A$ for some $\mathcal{R}$-complex $A$.
\end{thm}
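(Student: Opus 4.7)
The plan is to prove both claims via an explicit ``standardization'' procedure that peels off a standard complex summand from any reduced representative.

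First I would reduce to the case when $C$ is a reduced complex, i.e., a homogeneous free $\mathcal{R}$-complex with $\partial \equiv 0 \pmod{(U,V)}$, using Gaussian elimination to cancel units in the differential; this does not change the homotopy type. Writing $\partial = \partial_U + \partial_V$ and using $UV = 0$ in $\mathcal{R}$, one automatically gets $\partial_U^2 = \partial_V^2 = 0$ and $\partial_U \partial_V = \partial_V \partial_U = 0$, so the two component differentials act independently on a homogeneous basis. Crucially, in a reduced complex no basis element can be a boundary, so every basis element survives to a nonzero homology class; this observation drives the rest of the argument.

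Next I would inductively construct generators $x_0, x_1, \ldots, x_n$ and nonzero integers $(b_1, \ldots, b_n)$ exhibiting $C$ as containing a standard subcomplex $C(b_1,\ldots,b_n)$. The first knot-like hypothesis yields a cycle $x_0$ in $gr_U = 0$ whose image generates $H_*(C/U)$ modulo $V$-torsion, and after a basis change I may assume $x_0$ is a basis element with $\partial_V x_0 = 0$. Examining $\partial_U x_0$: if it is nonzero, then after a further basis change minimizing the $U$-exponent it takes the form $U^{|b_1|} x_1$ with $b_1 < 0$; otherwise, the second knot-like hypothesis forces some basis element $x_1$ to satisfy $\partial_U x_1 = U^{b_1} x_0$ with $b_1 > 0$. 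Swapping the roles of $U$ and $V$ and repeating with $x_1$ in the distinguished role produces $x_2$ and $b_2$, then back to $U$ gives $x_3$ and $b_3$, and so on in alternation. Finite generation of $C$ forces termination after some even number of steps $n$, the evenness arising from the zigzag having to connect the $gr_U = 0$ tower generator to its counterpart in $gr_V = 0$ supplied by the second knot-like condition.

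The main obstacle is upgrading this subcomplex inclusion to an actual direct-sum decomposition $C \cong C(b_1, \ldots, b_n) \oplus A$ rather than merely a local equivalence. At each inductive step this requires additional Gaussian-elimination moves to clear differential components linking the growing standard subcomplex to its complement, and the existence of such moves hinges on careful bookkeeping of the bigrading together with the minimality of the exponents $b_i$, which ensures that any cross-terms can be absorbed by a change of basis without disrupting prior stages. Once this splitting is established, local equivalence of $C$ with $C(b_1,\ldots,b_n)$ follows tautologically from the inclusion and projection maps between the two summands.
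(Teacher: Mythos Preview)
The paper does not give its own proof of this statement: Theorem~\ref{Theorem, every knot-like complex has a standard representative} is stated as a black-box citation of Theorem~6.1 and Corollary~6.2 of \cite{DHST19}, with no argument supplied. There is therefore nothing in the present paper to compare your proposal against.

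That said, your sketch does track the overall shape of the argument in \cite{DHST19}: reduce the complex, locate a distinguished generator using the knot-like condition, and zigzag alternately along $\partial_U$ and $\partial_V$ to build a standard subcomplex. Two points, however, are genuinely underdeveloped. First, your termination claim (``finite generation of $C$ forces termination'') presupposes that the $x_i$ you construct are linearly independent, but the inductive step as written does not preclude returning to an earlier generator; in \cite{DHST19} this is handled via a total order on standard sequences and a maximality argument rather than a naive finiteness count. Second, and more seriously, you correctly flag the splitting $C \cong C(b_1,\ldots,b_n) \oplus A$ as the main obstacle but do not actually resolve it: the assertion that cross-terms ``can be absorbed by a change of basis without disrupting prior stages'' is exactly the content of several pages of \cite{DHST19}, and the minimality of the exponents $b_i$ alone is not enough to guarantee this without the more careful inductive setup carried out there.
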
  
Therefore, the local equivalence class of a knot-like complex is determined by a finite sequence of integers. For a knot $K$, we define the $a_i$-invariants for $K$ as the sequence of integers corresponding to the local equivalence class of $CFK_{\mathcal{R}}(K)$. The set of $a_i$'s for a knot $K$ are symmetric as described by the following proposition. 

\begin{prop}[Lemma 6.10 of \cite{DHST19}]\label{Proposition, a_i's for a knot are symmetric}
Let $K$ be a knot in $S^3$, and let $C = C(a_1, \ldots , a_n)$ be the standard complex locally equivalent to $CFK_\mathcal{R}(K)$. Then $C$ is symmetric, i.e.\ $a_i=-a_{n-i+1}$.
\end{prop}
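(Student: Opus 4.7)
The plan is to exploit the conjugation symmetry of knot Floer homology. For a knot $K\subset S^3$, the complex $CFK_\mathcal{R}(K)$ carries an involution that interchanges $U$ and $V$ (and the two factors of the bigrading), arising from the symmetry of a doubly pointed Heegaard diagram under swapping the two basepoints. Concretely, let $\sigma$ be the operation on knot-like complexes that sends $(C,\partial_U,\partial_V,gr_U,gr_V)$ to the same underlying $\mathbb{F}$-vector space equipped with the swapped $\mathcal{R}$-action and the swapped bigrading. Tautologically $\sigma$ preserves local equivalence, and the conjugation symmetry supplies a local equivalence $\sigma\, CFK_\mathcal{R}(K)\simeq_{\mathrm{loc}} CFK_\mathcal{R}(K)$.

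The core computation is to determine $\sigma$ on standard complexes. I claim $\sigma\, C(b_1,\ldots,b_n)\cong C(-b_n,\ldots,-b_1)$. To see this, let $x_0,\ldots,x_n$ denote the standard generators of $C(b_1,\ldots,b_n)$ and set $y_j:=x_{n-j}$. Since $n$ is even, $j$ and $n-j$ have the same parity, so the parity bookkeeping that governs which arrows use $\partial_U$ versus $\partial_V$ is preserved under reindexing. The original arrow between $x_{i-1}$ and $x_i$ (using $\partial_U$ for $i$ odd and $\partial_V$ for $i$ even) becomes, after applying $\sigma$ and substituting $y_j=x_{n-j}$ with $j=n-i+1$, an arrow between $y_{j-1}$ and $y_j$ using the opposite letter, which matches the standard-complex convention at index $j$. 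A brief case check on the sign of $b_i$ then shows that the new sequence is $b'_j = -b_{n-j+1}$, as claimed.

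Combining the two ingredients gives
\[
C(a_1,\ldots,a_n)\simeq_{\mathrm{loc}} CFK_\mathcal{R}(K)\simeq_{\mathrm{loc}} \sigma\, CFK_\mathcal{R}(K)\simeq_{\mathrm{loc}} \sigma\, C(a_1,\ldots,a_n)\cong C(-a_n,\ldots,-a_1),
\]
so the standard complexes $C(a_1,\ldots,a_n)$ and $C(-a_n,\ldots,-a_1)$ represent the same local equivalence class. Uniqueness of the standard representative within a local equivalence class (a part of the DHST theory underlying Theorem \ref{Theorem, every knot-like complex has a standard representative} and the very reason the $a_i$ are well-defined knot invariants) then forces $a_i=-a_{n-i+1}$.

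The main obstacle, in my view, is not any single step but rather confirming that the conjugation symmetry of $CFK_\mathcal{R}$ for knots in $S^3$ actually produces a local equivalence between $CFK_\mathcal{R}(K)$ and $\sigma\, CFK_\mathcal{R}(K)$ in the precise sense used by DHST, as opposed to merely a chain homotopy equivalence modulo grading shifts; once that is in hand, the remainder is routine sign and parity bookkeeping inside the standard complex.
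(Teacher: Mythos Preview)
The paper does not supply its own proof of this proposition; it is stated as a citation of Lemma~6.10 of \cite{DHST19}, with no argument given. Your proposal is correct and is essentially the argument that appears in \cite{DHST19}: one uses the $U\leftrightarrow V$ conjugation symmetry of knot Floer homology, computes its effect on standard complexes as $\sigma\,C(b_1,\ldots,b_n)\cong C(-b_n,\ldots,-b_1)$, and then invokes uniqueness of the standard representative. Your parity and sign bookkeeping is accurate, and your caveat about needing the conjugation symmetry to yield a genuine local equivalence (not just a homotopy equivalence up to grading shift) is well placed; in practice this follows because for knots in $S^3$ the swap is realized by an honest bigraded chain homotopy equivalence interchanging $gr_U$ and $gr_V$, which is stronger than local equivalence.
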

The local equivalence relation is a partial algebraic model for concordance. If two knots $K_1$ and $K_2$ are smoothly concordanct, then $CFK_{\mathcal{R}}(K_1)$ is locally equivalent to $CFK_{\mathcal{R}}(K_1)$. Therefore, the $a_i$'s for knots are concordance invariants. More surprisingly, the $a_i$'s can be used to construct concordance homomorphisms.
\begin{defn}[Definition 7.1 of \cite{DHST19}]
Given a knot-like complex $C_\mathcal{R}$. Assume $C_{\mathcal{R}}$ is locally equivalent to the standard complex $C(a_1,\ldots,a_n)$. For any positive integer $j$, define $$\varphi_j(C_\mathcal{R})=\#\{a_j\vert a_i=j,\ i\ \text{odd}\}-\#\{a_j\vert a_i=-j,\ i\ \text{odd}\}.$$ For a knot $K$ in $S^3$, define $\varphi_j(K)=\varphi_j(CFK_{\mathcal{R}}(K))$.  
\end{defn} 

\begin{thm}[cf.\ Theorem 1.1 of \cite{DHST19}]
For any positive integer $j$, the map $\varphi_j$ induces a group homomorphism from the knot concordance group $\mathcal{C}$ to $\mathbb{Z}$. 
\end{thm}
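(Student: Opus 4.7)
The plan is to establish three facts in sequence: (i) $\varphi_j$ depends only on the local equivalence class of the underlying knot-like complex; (ii) $CFK_\mathcal{R}(K_0)$ and $CFK_\mathcal{R}(K_1)$ are locally equivalent whenever $K_0$ and $K_1$ are smoothly concordant; and (iii) $\varphi_j$ is additive under connected sum. Together these give that $\varphi_j$ descends to a well-defined group homomorphism $\mathcal{C} \to \mathbb{Z}$.

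For (i), I would invoke the uniqueness companion to Theorem \ref{Theorem, every knot-like complex has a standard representative} from \cite{DHST19}: two standard complexes $C(a_1,\ldots,a_n)$ and $C(b_1,\ldots,b_m)$ are locally equivalent if and only if $n=m$ and $a_i=b_i$ for all $i$. This is proved in \cite{DHST19} by extracting explicit obstructions from the structure of $\mathcal{R}$-equivariant chain maps between standard complexes. Granting this, the tuple $(a_1,\ldots,a_n)$ is an invariant of the local equivalence class, hence so is $\varphi_j$. Fact (ii) is a consequence of the functoriality of knot Floer homology under concordances: a smooth concordance from $K_0$ to $K_1$ induces $\mathcal{R}$-equivariant chain maps $CFK_\mathcal{R}(K_0) \rightleftarrows CFK_\mathcal{R}(K_1)$ whose composite in either direction is chain homotopic to an $\mathcal{R}$-equivariant local map, realizing a local equivalence.

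The main work is step (iii). Using the connected-sum formula $CFK_\mathcal{R}(K_1 \# K_2) \simeq CFK_\mathcal{R}(K_1) \otimes_\mathcal{R} CFK_\mathcal{R}(K_2)$, I would reduce to showing that $\varphi_j$ is additive under tensor product of standard complexes. Writing $C_i = C(a^{(i)}_1,\ldots,a^{(i)}_{n_i})$, the central combinatorial claim is that the standard representative of $C_1 \otimes_\mathcal{R} C_2$ has odd-indexed entries that agree, as a signed count with respect to $\varphi_j$, with the multiset union of the odd-indexed entries of $C_1$ and $C_2$: any cancellation that occurs while reducing $C_1 \otimes_\mathcal{R} C_2$ to standard form pairs an entry $j$ with an entry $-j$ in odd positions, which contributes $0$ to $\varphi_j$. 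Establishing this by an explicit, inductive identification of the standard representative of the tensor product — precisely the argument carried out in Section 7 of \cite{DHST19} — is the main obstacle. Once in place, summing over the two factors yields $\varphi_j(K_1 \# K_2) = \varphi_j(K_1) + \varphi_j(K_2)$, completing the proof.
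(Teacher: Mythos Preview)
The paper does not give its own proof of this theorem; it is stated with a citation to \cite{DHST19} and no argument is supplied. Your proposal correctly outlines the strategy of \cite{DHST19}: uniqueness of the standard representative gives well-definedness, concordance-invariance of the local equivalence class gives invariance, and the additivity of $\varphi_j$ under tensor product of knot-like complexes gives the homomorphism property. So there is no discrepancy to flag --- your sketch is consistent with the source the paper defers to, and there is nothing further in the paper to compare against.
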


\section{Proof of Theorem \ref{Main theorem}}\label{Section, Proof}

We prove Theorem \ref{Main theorem} by proving the following stronger theorem.
\begin{thm}\label{Theorem, epsilon equivalence class of M^n(K)}
$CFK_{\mathcal{R}}(M^n(D_2))$ is locally equivalent to $C(1,-n-1,n+1,-1)$ for $n=0,1,2,\ldots$.
\end{thm}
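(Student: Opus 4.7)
My plan is to prove Theorem \ref{Theorem, epsilon equivalence class of M^n(K)} by combining three ingredients: a partial computation of the hat-version knot Floer complex $\widehat{CFK}(M^n(D_2))$ via the immersed-curve technology of \cite{chen2019knot}, the symmetry constraint from Proposition \ref{Proposition, a_i's for a knot are symmetric}, and realizability constraints for knot-like complexes arising from genuine knots.

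First, I would dispose of the base case $n=0$ directly. The knot $D$ is the untwisted positive-clasp Whitehead double of the right-hand trefoil, for which $CFK_{\mathcal{R}}(D)$ is locally equivalent to the standard complex $C(1,-1)$. Since local equivalence respects connected sum (corresponding to tensor product of $\mathcal{R}$-complexes), $CFK_{\mathcal{R}}(D_2)$ is locally equivalent to $C(1,-1)\otimes C(1,-1)$, which a direct manipulation inside the DHST framework identifies with $C(1,-1,1,-1)$, matching the $n=0$ case of the claim.

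For the inductive range $n\geq 1$, Theorem \ref{Theorem, every knot-like complex has a standard representative} writes $CFK_{\mathcal{R}}(M^n(D_2))\sim_{loc} C(a_1,\ldots,a_m)$ for some even $m$. I would then use the immersed-curve computation (carried out separately in the computational section) to pin down $\widehat{CFK}(M^n(D_2))$, or at least enough of its structure to read off $\dim_{\mathbb{F}}\widehat{HFK}$ and the range of Alexander gradings. The hat-complex dimension yields an upper bound on $m+1$, while the Alexander support bounds $\max_i |a_i|$. Feeding this into the symmetry $a_i=-a_{m-i+1}$ of Proposition \ref{Proposition, a_i's for a knot are symmetric} already collapses the possibilities to a very short list, and the remaining candidates can then be eliminated using obstructions to a standard complex being locally equivalent to the knot Floer complex of an actual knot (for instance, constraints arising from the involutive structure on knot Floer homology together with the behavior of the Maslov grading on the generators of the standard form).

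The main obstacle I anticipate is the immersed-curve computation itself: the bordered bimodule for $S^1\times D^2\setminus M$ is notoriously large (Theorem 3.4 of \cite{Lev16}), so a direct attack for every $n$ is impractical. Instead I would proceed iteratively, first computing the immersed multicurve associated to $D_2$, pairing with $M$ once to obtain $\widehat{CFK}(M(D_2))$, identifying a structural feature (presumably a loop whose complexity increases linearly in $n$) that is stable under repeated pairing with $M$, and then promoting this observation to a formula valid for all $n$. The delicate point is to make this partial description of $\widehat{CFK}$ sharp enough that, in combination with symmetry and realizability, it forces the standard form $C(1,-n-1,n+1,-1)$ and nothing else; the rest of the proof of the theorem is a bookkeeping exercise once this is in hand.
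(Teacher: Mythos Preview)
Your overall plan---immersed-curve computation of $\widehat{CFK}$, symmetry from Proposition~\ref{Proposition, a_i's for a knot are symmetric}, and realizability obstructions---matches the paper's strategy in spirit, but the specific constraints you propose are far too coarse to succeed. The hat complex $\widehat{CFK}(M^n(D_2))$ has on the order of $32n$ generators (the paper's diagram in Figure~\ref{Figure, minimal intersection diagram} has $2n$ rows with $14$--$16$ intersection points each), so the bound $m+1\leq\dim\widehat{CFK}$ says almost nothing; likewise the Alexander support only bounds certain partial sums of the even-indexed $a_i$, not $\max_i|a_i|$ directly. With $m$ potentially in the tens and each $a_i$ ranging over an interval of length $\sim 2n$, symmetry alone does not collapse this to a short list, and there is no involutive or other blanket realizability criterion that would kill the remaining candidates wholesale.

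The paper resolves this by extracting much finer data from $\widehat{CFK}$: a \emph{characteristic multi-set} $Ch(\widehat{C})$ recording, for each vertical arrow in a simplified basis, the triple (Alexander grading of target, Maslov grading of target, length). Lemma~\ref{Lemma, choices of a_i are limited by characteristic set} says the standard summand's characteristic set must embed in that of the full complex, so once you know the Alexander--Maslov position of $x_{i-1}$ in the standard basis, the possible values of $a_i$ are read off from the arrow-lengths at that bigrading. The paper then determines $a_1,a_2,a_3,\ldots$ one at a time, \emph{interleaving} this grading constraint with the $\partial^2=0$ realizability obstructions of Lemma~\ref{Lemma, realizability of certain complexes} at each step to eliminate branches. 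A second point you miss is the inductive bookkeeping: rather than computing $\widehat{CFK}(M^n(D_2))$ (which grows uncontrollably, since the acyclic summand $A$ is fed back into the Mazur bimodule), the paper assumes $CFK_{\mathcal{R}}(K)\sim_{loc}C(1,-(n-1),n-1,-1)$, discards the acyclic piece, and pairs only the standard summand's immersed curve with the Mazur bordered diagram. This keeps the computation of fixed shape in $n$ and is what makes Lemma~\ref{Lemma, length of vertical arrows over simplied basis} tractable.
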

We first show how Theorem \ref{Theorem, epsilon equivalence class of M^n(K)} implies Theorem \ref{Main theorem}.
\begin{proof}[Proof of Theorem \ref{Main theorem}]
Let $\varphi_i:\mathcal{C}\rightarrow \mathbb{Z}$, $i=1,2,\ldots$ be the concordance homomorphisms due to Dai-Hom-Stoffregen-Truong. When $n\geq 1$, by Theorem \ref{Theorem, epsilon equivalence class of M^n(K)}, we have $\phi_i(M^n(D_2))=1$ if $i=1$ or $i=n+1$, and $\varphi_i(M^n(D))=0$ otherwise. Therefore, $\oplus_{i=2}^{\infty}\varphi_i$ maps the span of $\{M^n(D_2)\}_{n=1}^{\infty}$ isomorphically to $\oplus_{i=2}^{\infty} \mathbb{Z}$. Hence the span of $\{M^n(D_2)\}_{n=1}^{\infty}$ form a summand of the smooth knot concordance group $\mathcal{C}$.
\end{proof}

The proof of Theorem \ref{Theorem, epsilon equivalence class of M^n(K)} occupies the rest of the section. For ease of exposition, we will divide the proof into two subsections: the main argument of the proof is given in Subsection \ref{Subsection, main argument modulo a technical lemma}, modulo the proof of a computational result which is given in Subsection \ref{Subsection, computational result}.  
\subsection{Proof of Theorem \ref{Theorem, epsilon equivalence class of M^n(K)} modulo a lemma}\label{Subsection, main argument modulo a technical lemma}
The overall argument will be inductive on $n$. We will not directly compute $CFK_{\mathcal{R}}(M^n(D))$ as this would be too involved. Instead, we will use a constraint derived from $\widehat{CFK}(M^n(D))$ to determine the $a_i$'s. The advantage is that $\widehat{CFK}(M^n(D))$ is easier to compute than $CFK_{\mathcal{R}}(M^n(D))$. We first describe this constraint below. 

Let $(\widehat{C}, M, \{\mathcal{F}_d\})$ be a finitely generated, $\mathbb{Z}$-graded, $\mathbb{Z}$-filtered chain complex over $\mathbb{F}$, where $M$ denotes the $\mathbb{Z}$-grading and $\{\mathcal{F}_d\}$ denotes the filtration:
$$\cdots\subseteq\mathcal{F}_{d-1}\subseteq\mathcal{F}_{d}\subseteq\mathcal{F}_{d+1}\subseteq\cdots$$
Let $A$ denote the filtration grading and let $$\iota(a,m,l)_*:H_{m}(\mathcal{F}_{a}/\mathcal{F}_{a-1})\rightarrow H_{m}(\mathcal{F}_{a+l}/ \mathcal{F}_{a-1})$$ be the map induced by inclusion. 
\begin{defn}\label{Definition, characteristic set}
Let $(\widehat{C}, M, \{\mathcal{F}_d\})$ be a finitely generated, $\mathbb{Z}$-graded, $\mathbb{Z}$-filtered chain complex over $\mathbb{F}$. Define the characteristic multi-set $Ch(\widehat{C})$ of $\widehat{C}$ to be the set consists of triples $(a,m,l)\in \mathbb{Z}\times \mathbb{Z}\times \mathbb{Z}^{+}$ such that $(a,m,l)$ appears $k$ times in $Ch(\widehat{C})$ if and only if $k=dim(\ker \iota(a,m,l)_*)-\dim(\ker \iota(a,m,l-1)_*)$.
\end{defn}

We will be dealing with chain complexes $\widehat{C}$ such that $H_*(\widehat{C})\cong \mathbb{F}$ and $\widehat{C}$ is reduced, i.e.\ the differentials of $\widehat{C}$ strictly decrease the filtration level. In this case, $Ch(\widehat{C})$ can be easily computed in terms of a \textit{vertically simplified basis}. Recall that a basis $\{x_i\}$ for a $\mathbb{Z}$-graded, $\mathbb{Z}$-filtered chain complex is called a \textit{vertically simplified basis} if:
\begin{itemize}
\item[(1)]$\{x_i\}_{i=0}^{2n}$ is a filtered basis.
\item[(2)]Each basis element $x_i$ is homogeneous with respect to the $\mathbb{Z}$-grading.
\item[(3)]For each $x_i$, we either have $\partial x_i=0$ or $\partial x_i=x_{i+1}$.
\end{itemize}
For any $\mathbb{Z}$-graded, $\mathbb{Z}$-filtered chain complex, there exists a vertically simplified basis; see Proposition 11.57 of \cite{LOT18}. As $H_*(\widehat{C}) \cong \mathbb{F}$, we can find a vertically simplified basis $\{x_i\}_{i=0}^{2n}$ for $\widehat{C}$ such that $\partial x_{2i}=0$ for $i=1,\ldots,n$ and $\partial x_{2i-1}=x_{2i}$ for $i=1,\ldots, n$, where $\dim\widehat{C}=2n+1$. Let $l_{2i-1}$ be the length of the arrow $\partial x_{2i-1}=x_{2i}$ for $i=1,\ldots, n$. Then $Ch(\{\widehat{C}\})=\{(A(x_{2i}),M(x_{2i}),l_{2i-1})\}_{i=1}^{n}$, i.e.\ it records the Alexander grading of the target, Maslov grading of the target, and the length of each arrow. 
 
The constraint of $\widehat{CFK}$ on the local equivalence classes of $CFK_\mathcal{R}$ is given in the following lemma.

\begin{lem}\label{Lemma, choices of a_i are limited by characteristic set}
Let $C_\mathcal{R}$ be an $\mathcal{R}$-complex locally equivalent to $C(a_1,\ldots, a_{2n})$, and denote the corresponding hat-version complexes by $\widehat{C_\mathcal{R}}$ and $\widehat{C}(a_1,\ldots,a_{2n})$. Then $Ch(\widehat{C}(a_1,\ldots,a_{2n})) \subset Ch(\widehat{C_\mathcal{R}})$. 
\end{lem}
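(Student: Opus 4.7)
The plan is to deduce the lemma from Theorem \ref{Theorem, every knot-like complex has a standard representative}, which upgrades local equivalence between knot-like complexes into an explicit direct-sum decomposition, combined with additivity and filtered chain-homotopy invariance of the multi-set $Ch$.

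First, since local equivalence is only defined between knot-like complexes, $C_\mathcal{R}$ is itself a knot-like complex, so Theorem \ref{Theorem, every knot-like complex has a standard representative} furnishes a bigrading-preserving, $\mathcal{R}$-linear chain homotopy equivalence
\begin{equation*}
C_\mathcal{R} \;\simeq\; C(a_1,\ldots,a_{2n}) \oplus A
\end{equation*}
for some auxiliary $\mathcal{R}$-complex $A$. I would then descend this decomposition to the hat level. Choosing a homogeneous filtered basis of $C_\mathcal{R}$ adapted to the splitting (namely, the union of homogeneous filtered bases of the two summands) and setting $U=0$, $V=1$ as in Section \ref{Section, Preliminary} produces a filtered chain isomorphism
\begin{equation*}
\widehat{C_\mathcal{R}} \;\cong\; \widehat{C}(a_1,\ldots,a_{2n}) \oplus \widehat{A}.
\end{equation*}
Because the hat complex is well defined up to filtered chain homotopy equivalence, this identification is canonical up to such equivalence.

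Next, I would observe that $Ch(\widehat{C})$ is built solely out of the dimensions of kernels of the inclusion-induced maps $\iota(a,m,l)_*$ between the filtration quotients $H_m(\mathcal{F}_a/\mathcal{F}_{a-1})$. These quotients, and the inclusion-induced maps between them, are compatible with direct sums of $\mathbb{Z}$-filtered chain complexes, so the kernels split and dimensions add. Hence
\begin{equation*}
Ch(\widehat{C_1}\oplus\widehat{C_2}) \;=\; Ch(\widehat{C_1})\sqcup Ch(\widehat{C_2})
\end{equation*}
as multi-sets. By the same token, a filtered chain homotopy equivalence induces isomorphisms on each $H_m(\mathcal{F}_a/\mathcal{F}_{a-1})$ that commute with the inclusion-induced maps, so $Ch$ is a filtered chain homotopy invariant. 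Combining these with the decomposition above yields
\begin{equation*}
Ch(\widehat{C_\mathcal{R}}) \;=\; Ch(\widehat{C}(a_1,\ldots,a_{2n}))\sqcup Ch(\widehat{A}),
\end{equation*}
from which the stated inclusion is immediate.

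The main obstacle is effectively packaged into Theorem \ref{Theorem, every knot-like complex has a standard representative}: on its own, local equivalence only provides a pair of bigraded $\mathcal{R}$-linear maps inducing isomorphisms on an appropriate localisation, which is not enough to split $C(a_1,\ldots,a_{2n})$ off as a genuine $\mathcal{R}$-summand, and hence not enough to split the filtration on the hat complex. Once the splitting is in hand, the remaining work is essentially formal bookkeeping with filtered bases and the elementary additivity of kernels under direct sums.
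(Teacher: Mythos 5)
Your proposal is correct and follows essentially the same route as the paper: invoke the direct-sum decomposition $C_\mathcal{R}\simeq C(a_1,\ldots,a_{2n})\oplus A$ provided by Theorem \ref{Theorem, every knot-like complex has a standard representative} (i.e.\ Corollary 6.2 of \cite{DHST19}), pass to the hat level, and use filtered homotopy invariance of $Ch$ together with its additivity under direct sums. The only difference is that you spell out the multi-set additivity $Ch(\widehat{C_1}\oplus\widehat{C_2})=Ch(\widehat{C_1})\sqcup Ch(\widehat{C_2})$ explicitly, which the paper leaves implicit.
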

\begin{proof}
Since $C_\mathcal{R}$ is locally equivalent to $C(a_1,\ldots, a_{2n})$, by Corollary 6.2 of \cite{DHST19} $C_\mathcal{R}$ is homotopy equivalent to $C(a_1,\ldots, a_{2n})\oplus A$, where $A$ is some $\mathcal{R}$-complex. In particular, we have $\widehat{C_\mathcal{R}}$ is $\mathbb{Z}$-filtered homotopy equivalent to $\widehat{C}(a_1,\ldots, a_{2n})\oplus \widehat{A}$. It follows easily from Definition \ref{Definition, characteristic set} that the characteristic set is invariant under filtered chain homotopy equivalence and hence $Ch(\widehat{C_\mathcal{R}})=Ch(\widehat{C}(a_1,\ldots, a_{2n})\oplus \widehat{A})$. This implies $Ch(\widehat{C}(a_1,\ldots, a_{2n}))\subset Ch(\widehat{C_\mathcal{R}})$.
\end{proof}
To apply the above constraint for our purpose, it turns out we will only need partial information on $Ch(\widehat{CFK}(M^n(D)))$; this is summarized in the following lemma. 
\begin{lem}\label{Lemma, length of vertical arrows over simplied basis}
If $CFK_{\mathcal{R}}(K)$ is locally equivalent to $C(1,-n+1,n-1,1)$ for $n\geq 2$, then $CFK_R(M(K))$ is locally equivalent to a reduced complex $C_\mathcal{R}$ such that over a vertically simplified basis for $\widehat{C_\mathcal{R}}$ the following properties hold:
\begin{itemize}
\item[(1)]The vertical arrows with terminals of Alexander grading $-n-1$ and Maslov index $-2n-2$ are of length $1$. 
\item[(2)]The vertical arrows with initials of Alexander grading $n$ and Maslov grading $-1$ are either of length $1$ or $n$, and there is only one such vertical arrow of length $n$.
\item[(3)]The vertical arrows with initials or terminals of Alexander grading $-n+1$ and Maslov grading $-2n$ are of length $1$.
\item[(4)]There are no vertical arrows with initials of Alexander grading $n-2$ and Maslov grading $-3$, and the vertical arrows with terminals of Alexander grading $n-2$ and Maslov grading $-3$ are of length $1$.
\item[(5)]The vertical arrows with terminals of Alexander grading $0$ and Maslov grading $-2$ are either of length $1$ or $n$, and the vertical arrows with initials of Alexander grading $0$ and Maslov grading $-2$ are of length $1$.
\item[(6)]The vertical arrows with terminals of Alexander grading $1$ and Maslov grading $-1$ are of length $1$.
\item[(7)]There are no vertical arrows with initials of Alexander grading $-2$ and Maslov grading $-4$, and the vertical arrows with terminals of Alexander grading $-2$ and Maslov grading $-4$ are of length $1$.
\item[(8)]The vertical arrows with initials or terminals of Alexander grading $-1$ and Maslov grading $-3$ are of length $1$.
\item[(9)]There are no vertical arrows with terminals of Alexander grading $2$ and Maslov grading $0$ and the vertical arrows with initials of Alexander grading $2$ and Maslov grading $0$ are of length $1$.
\end{itemize}
\end{lem}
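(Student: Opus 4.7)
The plan is to compute $\widehat{CFK}(M(K))$ directly from $\widehat{CFK}(K)$ using the immersed-curve calculus for winding-number-one satellites developed in \cite{chen2019knot}, and then read off the lengths of the vertical arrows in the specified bigradings. First, I would use the hypothesis on $K$ to pin down the structure of $\widehat{CFK}(K)$ in the relevant region. By Theorem \ref{Theorem, every knot-like complex has a standard representative}, $CFK_{\mathcal{R}}(K)$ is chain homotopy equivalent to $C(1,-n+1,n-1,1)\oplus A$ for some auxiliary $\mathcal{R}$-complex $A$. Specializing to the hat version writes $\widehat{CFK}(K)$ as the direct sum of the hat reduction of the standard complex, whose generators and arrows sit at explicit bigradings, and a piece $\widehat{A}$. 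The former provides the essential skeleton of the immersed curve representing $K$; the arcs coming from $\widehat{A}$ are a priori unknown but are controlled by the reduced-complex hypothesis.

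Next, I would apply the Mazur pattern satellite operation at the level of immersed curves. The bordered invariant of $S^1\times D^2 - M$ is computed in Theorem 3.4 of \cite{Lev16} and translates, via the dictionary of \cite{chen2019knot}, into an immersed multicurve that can be paired with the curve for $K$. The pairing produces a chain-complex model for $\widehat{CFK}(M(K))$ in which every generator and every differential carries an Alexander and Maslov bigrading coming from local intersection data. In the bigradings listed in items (1), (3), (4), (6), (7), (8), (9), I expect a grading-parity and Alexander-symmetry count to show that the only contributions come from the essential skeleton, so that the vertical arrows must have length $1$; the Maslov grading shifts $-2n-2$, $-2n$, $-3$, $-2$, $-1$, $-4$, $-3$, $0$ are then forced by normalizing so that the unique $\mathbb{F}$-summand of $H_*(\widehat{CFK}(M(K)))$ lies in Maslov grading zero.

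The main obstacle, I expect, lies in items (2) and (5), which allow vertical arrows of length $n$. Here the unique length-$n$ arrow should come from the pairing of the distinguished long edge of the Mazur immersed curve with the length-$(n-1)$ vertical arrow of the standard complex $C(1,-n+1,n-1,1)$; I would track this pairing explicitly and argue by a parity and symmetry count that all other arrows in those two bigradings have length $1$. The claim that there is exactly one long arrow in (2) and none at the forbidden endpoint of (5) then reduces to verifying that no further long arrows can be produced by pairings involving the unknown piece $\widehat{A}$, which follows from the constraint that $A$ sits on top of an $\mathcal{R}$-acyclic summand. Once these two items are established, the remaining items amount to local bookkeeping with the immersed-curve pairing at the enumerated bigradings.
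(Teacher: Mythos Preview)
Your overall strategy—immersed-curve pairing with the Mazur pattern—is the same as the paper's, but there is a genuine gap in how you handle the unknown summand $A$.

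You propose to pair the full immersed curve for $K$ (standard skeleton plus unknown arcs from $\widehat{A}$) against the Mazur pattern, and then argue that in each of the nine bigradings the contributions from $\widehat{A}$ either vanish or are benign, via ``grading-parity and Alexander-symmetry counts'' or via $A$ being $\mathcal{R}$-acyclic. This does not work as stated: nothing prevents $A$ from having generators whose pairing with the Mazur curve lands in exactly the bigradings listed in items (1)--(9), and there is no general parity or symmetry constraint that forces such contributions to have arrow length $1$. The acyclicity of $A$ tells you nothing about arrow lengths in individual bigradings.

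The paper sidesteps this problem entirely by exploiting the wording of the lemma: one only needs \emph{some} reduced complex $C_\mathcal{R}$ locally equivalent to $CFK_\mathcal{R}(M(K))$, not $CFK_\mathcal{R}(M(K))$ itself. The paper takes $C_\mathcal{R}$ to be the edge-reduction of $\widehat{CFDA}(S^1\times D^2\setminus nb(M))\boxtimes\widehat{CFD}(C_{n-1})$, i.e.\ the pairing with \emph{only} the standard piece $C_{n-1}=C(1,-n+1,n-1,-1)$. Because $C_{n-1}$ is a knot-like complex, $H_*(\widehat{C_\mathcal{R}})\cong\mathbb{F}$, which forces the complementary piece (coming from pairing with $A_{n-1}$) to have trivial hat homology; hence $C_\mathcal{R}$ is locally equivalent to $CFK_\mathcal{R}(M(K))$. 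Now $C_\mathcal{R}$ is completely explicit: the immersed curve for $C_{n-1}$ is known, the pairing diagram has finitely many labeled intersection points with computable $(A,M)$-gradings, and items (1)--(9) are verified by listing, for each specified bigrading, every generator with that bigrading and inspecting the minimal subcomplex it sits in (occasionally after a filtered basis change). There is no abstract parity argument; it is a direct enumeration from the diagram.

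So the missing idea is: do not try to control $\widehat{A}$ inside $\widehat{CFK}(M(K))$; instead, throw $A$ away \emph{before} pairing and verify that what remains is still in the correct local equivalence class.
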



We defer the proof of Lemma \ref{Lemma, length of vertical arrows over simplied basis} until Section \ref{Subsection, computational result}, hoping the reader might feel less digressed by a detailed computation.

The last ingredient we need is a lemma that concerns the realizability of certain $\mathcal{R}$-complexes by knots. This is similar to Lemma 3.8 of \cite{Hom15}; we remind the readers that the convention of signs of the $a_i$-invariants in \cite{Hom15} are different than what we use here.  

\begin{lem}\label{Lemma, realizability of certain complexes}
The following standard complexes are not locally equivalent to $CFK_{\mathcal{R}}(K)$ for any knot $K$ in $S^3$.
\begin{itemize}
\item[(1)] $C(1,-1,-1,\ldots)$.
\item[(2)] $C(1,-1,1,1,\ldots)$.
\item[(3)] $C(1,-n,-1,-l,\ldots)$ for any $n>0$ and any $l>0$.
\item[(4)] $C(1,-n,-1,1,1,\ldots)$ for any $n>0$.
\item[(5)] $C(1,-n,1,1,\ldots)$ for any $n>0$.
\item[(6)] $C(1,-n,1,-1,-1,\ldots)$ for any $n>0$.
\end{itemize}
\end{lem}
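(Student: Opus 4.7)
I would proceed by contradiction in each of the six cases, assuming $CFK_{\mathcal{R}}(K) \sim C(a_1, \ldots, a_N)$ for some knot $K$, with the $a_i$'s starting with the specified prefix. The plan combines two ingredients: the symmetry $a_i = -a_{N-i+1}$ from Proposition \ref{Proposition, a_i's for a knot are symmetric}, and an adaptation of the algebraic obstructions in Lemma 3.8 of \cite{Hom15} to the $\mathcal{R}$-complex setting of Dai--Hom--Stoffregen--Truong.

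First I would dispose of the smallest admissible lengths $N$ by direct application of symmetry. For instance, in case (1) with $N=4$, the prefix forces $a_3 = -1$ while symmetry forces $a_3 = -a_2 = 1$. In case (3), $N = 4$ gives $a_3 = -a_2 = n$, conflicting with $a_3 = -1$ and $n > 0$; $N = 6$ gives $a_4 = -a_3 = 1$, conflicting with $a_4 = -l < 0$. Analogous short-length conflicts eliminate the initial few lengths in cases (2), (4), (5), and (6), and in each case this fixes a lower bound $N \ge N_0$ below which the prefix is already impossible.

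For the larger values of $N$ consistent with symmetry, I would argue structurally, mirroring Hom's approach. The idea is to examine the chain-homotopy type of the standard complex near its first few generators $x_0, x_1, x_2, \ldots$ using the arrow pattern imposed by the prefix, and to pair this with the mirrored pattern forced at $x_N, x_{N-1}, \ldots$ by symmetry. In each case, I expect one of these generators to become a ``spurious'' cycle or boundary, allowing the construction of an explicit local equivalence between $C(a_1, \ldots, a_N)$ and a shorter standard complex whose prefix falls under one of the already-excluded short-length cases. The parameters $n$ and $l$ in cases (3)--(6) should be handled uniformly, since the obstruction depends only on the sign pattern of the prefix and not on the magnitudes.

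The main obstacle I foresee is producing the correct reduction to a shorter standard complex in the presence of the unspecified tail ``$\ldots$'', because that tail governs the later differentials and may a priori obstruct the intended cancellation. I would handle this by induction on $N$, splitting on the sign of the next unspecified entry $a_{k}$ beyond the prefix and checking that each subconfiguration either immediately contradicts symmetry or admits the desired reduction while preserving the parity structure (odd indices $\leftrightarrow$ $U$-arrows, even indices $\leftrightarrow$ $V$-arrows) of the standard form.
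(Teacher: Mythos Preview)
Your symmetry arguments for small $N$ are fine but unnecessary, and your description of the large-$N$ argument misidentifies the mechanism behind Hom's Lemma~3.8. You propose to exhibit a local equivalence from $C(a_1,\ldots,a_N)$ to a \emph{shorter} standard complex; but distinct standard complexes are never locally equivalent (this is the uniqueness part of Theorem~\ref{Theorem, every knot-like complex has a standard representative}), so no such reduction can exist. Nor does symmetry alone suffice for long sequences: for example $C(1,-1,-1,1,-1,1,1,-1)$ is symmetric and has the prefix in~(1), so something beyond Proposition~\ref{Proposition, a_i's for a knot are symmetric} is needed.

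The actual obstruction, both in \cite{Hom15} and in the paper, is that $CFK_{\mathcal{R}}(K)$ arises from a genuine chain complex $\mathcal{CFL}^{-}(K)$ over $\mathbb{F}[U,V]$ by setting $UV=0$. One lifts the standard-complex basis $\{x_i\}$ to $\mathcal{CFL}^{-}(K)$; there the displayed $U$- and $V$-arrows are the entire differential modulo $UV$, and one computes $\partial^2$ in the full $\mathbb{F}[U,V]$-complex. In case~(3), the composite $x_1 \xrightarrow{V^n} x_2 \xrightarrow{U} x_3$ contributes $UV^{n}x_3$ to $\partial^2 x_1$; cancelling this term forces an additional arrow into $x_3$ with coefficient a pure power of $V$, contradicting the standard form. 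This argument is uniform in $N$ and completely insensitive to the unspecified tail ``$\ldots$'', so the induction and case split you outline are not needed, and the tail never obstructs anything.
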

\begin{proof}
The proofs for item (1), (2), and (5) can be found in Lemma 3.8 of \cite{Hom15}. In fact, the proofs of all of the items are similar, so we will only prove (3) and leave the other items for the readers.  

We prove this by contradiction. Let $K$ be a knot in $S^3$. We abbreviate $(\mathcal{CFL}^{-}(K),\partial)$ by $(C,\partial)$. Let $C_\mathcal{R}$ be the $\mathcal{R}$-complex constructed from $C$. Assume $C_\mathcal{R}$ is locally equivalent to $C'=C(1,-n,-1,-l,\ldots)$ for some $n>0$ and some $l>0$. 

By Corollary 6.2 of \cite{DHST19}, $C_\mathcal{R}$ is isomorphic to $C' \oplus A$. Let $\{x_i\}$ be a homogeneous basis for $C_\mathcal{R}$ over $\mathcal{R}$ extending the standard basis for $C'$. Then $\{x_i\}$ induces a filtered basis for $C$ over $\mathbb{F}[U,V]$, which we still denote by $\{x_i\}$. As $\{x_i\}$ extends the standard basis for $C'$, after reordering the basis if necessary we may assume in $(C,\partial)$ (see Figure \ref{Figure, the complex C(1,-n,-1,-l,...)}):
\begin{itemize}
\item[(1)]$\partial_Ux_1= Ux_0$ and $\partial_Ux_2= Ux_3$.
\item[(2)]$\partial_Vx_1= V^nx_2$. 
\item[(3)]$\partial_Vx_3= V^lx_4$.
\item[(4)]All other differentials to or from $x_0,\ldots,x_3$ have coefficient divisible by $UV$.
\end{itemize}

\begin{figure}[htb!]
\begin{center}
\includegraphics[scale=0.25]{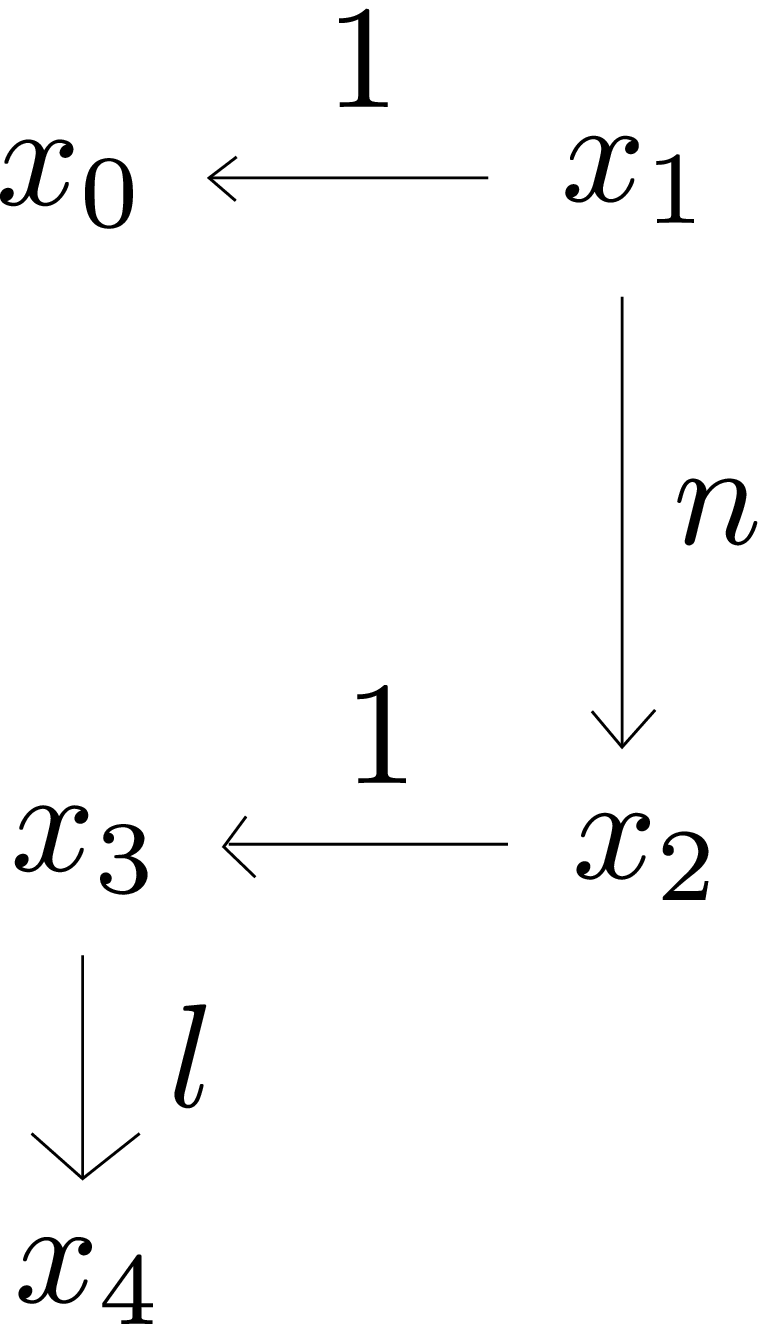}
\caption{}\label{Figure, the complex C(1,-n,-1,-l,...)}
\end{center}
\end{figure}

Note $\partial_U\partial_Vx_1=V^n Ux_3$. In order for $\partial^2 x_1=0$, there must be an arrow from $x_1$ to $Uy$ for some $y=\sum V^{n_i}x_i$ such that $\partial_Vy=V^n x_3$. In particular, there is some other differentials to $x_3$ with coefficient being a power of $V$. This contradicts (4) listed above.

\end{proof}
\begin{proof}[Proof of Theorem \ref{Theorem, epsilon equivalence class of M^n(K)}]
We induct on $n$. For $n=0$, $CFK_{\mathcal{R}}(D_2)$ is locally equivalent to $C(1,-1,1,-1)$ as $CFK_\mathcal{R}(D)$ is locally equivalent to $C(1,-1)$. 

We move to the inductive step. Assume $CFK_{R}(K)$ is locally equivalent to $C(1,-(n-1),(n-1),-1)$ for some $n\geq 2$, and let $C_\mathcal{R}=CFK_{\mathcal{R}}(M(K))$. We prove $C_\mathcal{R}$ is locally equivalent to $C(1,-n,n,-1)$ in six steps. First note that by Theorem 1.4 of \cite{Lev16}, we have $\tau(M(K))=n+1$ and $\epsilon(M(K))=1$. 

\textbf{Step 1.} \emph{$a_1=1$}. Since $\epsilon(M(K))=1$, we have $a_1>0$. Suppose $C_\mathcal{R}$ is locally equivalent ot $C(a_1,\ldots,a_k)$. By the symmetry the $a_i$'s, we have $a_k=-a_1$. Therefore, $a_1\in\{l\vert (-n-1,-2n-2,l)\in Ch(\widehat{C_{\mathcal{R}}})\}$ by Lemma \ref{Lemma, choices of a_i are limited by characteristic set}. By Lemma \ref{Lemma, length of vertical arrows over simplied basis} (1), $a_1=1$. 

\textbf{Step 2.} \emph{$a_2=-n$.} As $a_1=1$, we have $a_2<0$ by Lemma 3.7 of \cite{Hom15}. So by Lemma \ref{Lemma, choices of a_i are limited by characteristic set} and Lemma \ref{Lemma, length of vertical arrows over simplied basis} (2), $a_2=-1$ or $a_2=-n$. 

We show that $a_2\neq -1$ by contradiction. If $a_2=-1$, we first claim $a_3=1$. To see this, note $a_3=-a_{k-3}$ and hence $a_3\in\{l\vert (-(n-1),-2n,l)\in Ch(\widehat{C_{\mathcal{R}}})\}\cup \{-l\vert (-(n-1)-l,-2n-1,l)\in Ch(\widehat{C_{\mathcal{R}}})\}$ by Lemma \ref{Lemma, choices of a_i are limited by characteristic set}. By Lemma \ref{Lemma, length of vertical arrows over simplied basis} (3), we have $a_3=-1$ or $a_3=1$. However, $a_3 \neq -1$ by Lemma \ref{Lemma, realizability of certain complexes} (1). Therefore, we have $a_3=1$. Now similarly we have $a_4\in \{l\vert (n-2,-3,l)\in Ch(\widehat{C_{\mathcal{R}}})\}\cup \{-l\vert ((n-2)-l,-4,l)\in Ch(\widehat{C_{\mathcal{R}}})\}$. By Lemma \ref{Lemma, length of vertical arrows over simplied basis} (4), $a_4=1$. By Lemma \ref{Lemma, realizability of certain complexes} (2), $a_4\neq 1$ and hence we have derived a contradiction. Therefore, $a_2\neq -1$ and hence $a_2=-n$.

\textbf{Step 3.} $a_3\in\{-1,1,n\}$. Note $a_3=-a_{k-3}$ by symmetry, and hence $a_3\in \{l\vert (0,-2,l)\in Ch(\widehat{C_{\mathcal{R}}})\}\cup \{-l\vert (-l,-3,l)\in Ch(\widehat{C_{\mathcal{R}}})\}$. Therefore, $a_3\in\{-1,1,n\}$ by Lemma \ref{Lemma, length of vertical arrows over simplied basis} (5).

\textbf{Step 4.} $a_3\neq -1$. We prove this by contradiction. First we claim that if $a_3=-1$, then $a_4>0$. To see this claim, assume otherwise $a_4=-l<0$. Then $C_\mathcal{R}$ is locally equivalent to $C(1,-n,-1,-l,\ldots)$, which is not realizable by knots by Lemma \ref{Lemma, realizability of certain complexes} (3). Therefore, $a_4>0$.

As $a_4>0$, we have $a_4\in \{l\vert (1,-1,l)\in Ch(\widehat{C}) \}$ by Lemma \ref{Lemma, choices of a_i are limited by characteristic set}. By Lemma \ref{Lemma, length of vertical arrows over simplied basis} (6) we know $a_4=1$. Similarly by Lemma \ref{Lemma, length of vertical arrows over simplied basis} (7) one sees $a_5=1$. However, this contradicts Lemma \ref{Lemma, realizability of certain complexes} (4), which states $C_{\mathcal{R}}$ can not be locally equivalent to $C(1,-n,-1,1,1,\ldots)$. Therefore, $a_3\neq -1$.

\textbf{Step 5.} \emph{$a_3\neq 1$}. We prove this by contradiction. If $a_3=1$, then by Lemma \ref{Lemma, choices of a_i are limited by characteristic set}, $a_4\in \{l\vert (-1,-3,l)\in Ch(\widehat{C_{\mathcal{R}}})\}\cup \{-l\vert (-1-l,-4,l)\in Ch(\widehat{C_{\mathcal{R}}})\}$. By Lemma \ref{Lemma, length of vertical arrows over simplied basis} (8) we have $a_4=1$ or $a_4=-1$. $a_4\neq 1$ by Lemma \ref{Lemma, realizability of certain complexes} (5). Therefore, $a_4=-1$. Then applying Lemma \ref{Lemma, choices of a_i are limited by characteristic set} and Lemma \ref{Lemma, length of vertical arrows over simplied basis} (9) we have $a_5=-1$. However, this contradicts Lemma \ref{Lemma, realizability of certain complexes} (6), which says $C_{\mathcal{R}}$ can not be locally equivalent to $C(1,-n,1,-1,-1,\ldots)$.

\textbf{Step 6.} \emph{$a_i=0$ for $i\geq 5$ and $a_4=-1$.} By Step 3, 4, and 5, we have $a_3=n$. If $a_i\neq 0$ for some $i\geq 5$. Then by symmetry of the $a_i$'s we have $C_{\mathcal{R}}$ is locally equivalent to $C(1,-n,n,\ldots,-n, n,-1)$. By Lemma \ref{Lemma, choices of a_i are limited by characteristic set} this implies $(0,-2,n)$ appears in $Ch(\widehat{C_{\mathcal{R}}})$ at least twice, corresponding to the two vertical arrows of length $n$. However, by Lemma \ref{Lemma, length of vertical arrows over simplied basis} (2) we know $(0,-2,n)$ only appears in $Ch(\widehat{C_{\mathcal{R}}})$ once. Therefore $a_i=0$ for $i\geq 5$ and $a_4=-1$ readily comes from the symmetry of the $a_i$'s. 

\end{proof}

\subsection{Proof of Lemma \ref{Lemma, length of vertical arrows over simplied basis}}\label{Subsection, computational result}
\begin{proof}
Since $CFK_\mathcal{R}(K)$ is locally equivalent to  $C_{n-1}=C(1,-n+1,n-1,1)$, $CFK_\mathcal{R}(K)$ is homotopic equivalent to $C_{n-1}\oplus A_{n-1}$ for some $\mathcal{R}$-complex $A_{n-1}$. Theorem 11.26 of \cite{LOT18} gives an algorithm to obtain $\widehat{CFD}(S^3\backslash nb(K))$ from $CFK_{R}(K)$. (We do not specify a parametrization for the boudary of $S^3\backslash nb(K)$ as we will not need it.) Conversely, the construction in Section 4.3 of \cite{hanselman2018heegaard} can be easily modified to give an $\mathcal{R}$-complex from the bordered invariant $\widehat{CFD}(S^3\backslash nb(K))$. Note this $\mathcal{R}$-complex is locally equivalent to $CFK_{R}(K)$; see Proposition 2 of \cite{hanselman2019cabling}. Using this correspondence, we write $\widehat{CFD}(S^3\backslash nb(K))=\widehat{CFD}(C_{n-1})\oplus\widehat{CFD}(A_{n-1}).$ Then we have $$\widehat{CFD}(M(K))=\widehat{CFDA}(S^1 \times D^2\backslash nb(M))\boxtimes (\widehat{CFD}(C_{n-1})\oplus\widehat{CFD}(A_{n-1})).$$
Let $C_\mathcal{R}$ denote the reduced $\mathcal{R}$-complex obtained by edge-reduction of $\widehat{CFDA}(S^1 \times D^2\backslash nb(M))\boxtimes \widehat{CFD}(C_{n-1})$, and let $A$ denote the $\mathcal{R}$-complex corresponding to $\widehat{CFDA}(S^1 \times D^2\backslash nb(M))\boxtimes \widehat{CFD}(A_{n-1})$. Then $CFK_\mathcal{R}(M(K))=C_\mathcal{R}\oplus A$. As $C_{n-1}$ is a knot-like complex, we have $H_*(\widehat{C_\mathcal{R}})\cong \mathbb{F}$. This implies $H_*(\widehat{A})=0$ and hence $CFK_\mathcal{R}(M(K))$ is locally equivalent to $C_\mathcal{R}$. 

\begin{figure}[htb!]
\begin{center}
\includegraphics[scale=0.45]{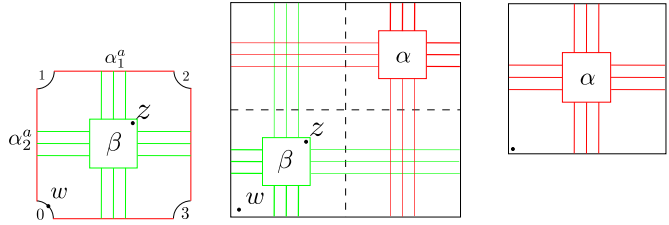}
\caption{An immersed-curve approach to compute $\widehat{C_{\mathcal{R}}}$. Left: a genus-one doubly-pointed bordered Heegaard diagram. Right: immersed curves for a type D structure. Middle: an immersed Heegaard diagram obtained from laying the bordered Heegaard diagram over the immersed-curve diagram.}\label{Figure, immersed curve pairing}
\end{center}
\end{figure}

$\widehat{C_\mathcal{R}}$ can be computed in terms immersed curves by an approach given in \cite{chen2019knot}: First represent $\widehat{CFD}(C_{n-1})$ as an immersed curve on the punctured torus using the algorithm given in \cite{hanselman2016bordered}, and denote this curve by $\alpha$. Then let $(\Sigma,\beta,\alpha^a_1,\alpha^a_2,w,z)$ be a genus-one doubly pointed bordered Heegaard diagram for the Mazur pattern. Laying the bordered Heegaard diagram over the immersed-curve diagram as shown in Figure \ref{Figure, immersed curve pairing}, we obtain a doubly-pointed Heegaard diagram $(T^2,\alpha,\beta,w,z)$ with $\alpha$ being an immersed Lagrangian. Then $\widehat{CFA}(S^1\times D^2,M)\boxtimes \widehat{CFD}(C_{n-1})$ is isomorphic to the filtered Lagrangian intersection Floer chain complex of $(T^2,\alpha,\beta,w,z)$. In particular, the reduced complex $\widehat{C_\mathcal{R}}$ corresponds to the Lagrangian intersection Floer chain complex of a minimal intersection diagram. For convenience, we work on the universal cover of $T^2$. As an example, a lift of the immersed-curve diagram corresponding to $\widehat{CFD}(C_{2})$ and a bordered Heegaard diagram for the Mazur pattern are shown in Figure \ref{Figure, bordered diagram for the Mazur pattern and immersed cuver for C(1,-2,2,-1)}.

\begin{figure}[htb!]
\begin{center}
\includegraphics[scale=0.45]{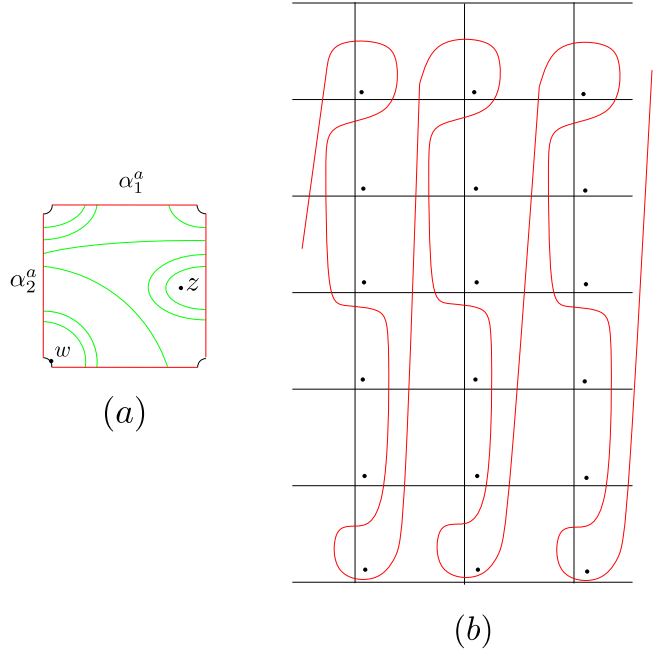}
\caption{(a) A bordered Heegaard diagram for the Mazur pattern. (b) The immersed-curve diagram corresponding to $C(1,-2,2,-1)$.}\label{Figure, bordered diagram for the Mazur pattern and immersed cuver for C(1,-2,2,-1)}
\end{center}
\end{figure}

\begin{figure}[htb!]
\begin{center}
\includegraphics[scale=0.73]{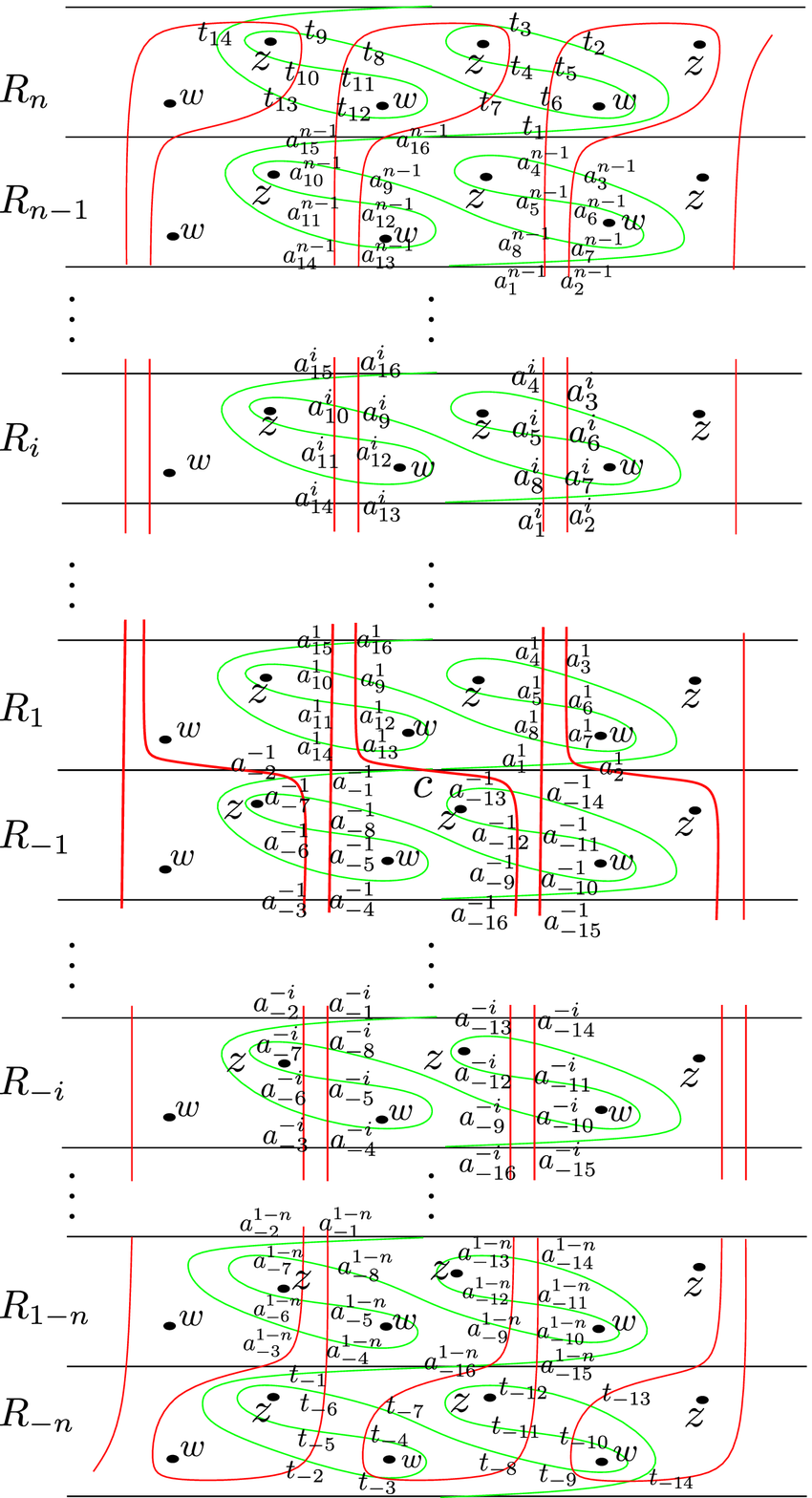}
\caption{The diagram that computes $\widehat{C_{\mathcal{R}}}$: The generators correspond to the intersection points, and differentials count disks that do not cross the $w$ base points.}\label{Figure, minimal intersection diagram}
\end{center}
\end{figure}
A minimal intersection diagram for $\widehat{C_{\mathcal{R}}}$ is shown in Figure \ref{Figure, minimal intersection diagram}. There are $2n$ rows. Observe that the diagram is symmetric about the intersection point $c$ in the center. Exploiting this symmetry, the upper $n$ rows are labeled from $R_1,\ldots, R_n$ and the lower $n$ rows are labeled from $R_{-1}$ to $R_{-n}$. Further notice that if we ignore $c$ in $R_1$, then the diagram for each of the rows from $R_1$ to $R_{n-1}$ are the same. There are $16$ intersection points in each row, and we label those in $R_i$ by $a^i_1\ldots a^i_{16}$ for $i=1,\ldots,n-1$, where the subscript increases as we traverse upwards along the $\beta$ curve. The top row $R_n$ has $14$ intersection points, and we label them by $t_1,\ldots,t_{14}$, where the subscript increases as we traverse upwards along the $\beta$ curve. Symmetrically, the intersection points in row $R_{-i}$ are labeled $a^{-i}_{-1},\ldots,a^{-i}_{-16}$ for $i=1,\ldots,n-1$ and the intersection points in $R_{-n}$ are labeled $t_{-1},\ldots,t_{-14}. $

\begin{figure}[htb!]
\begin{center}
\includegraphics[scale=0.7]{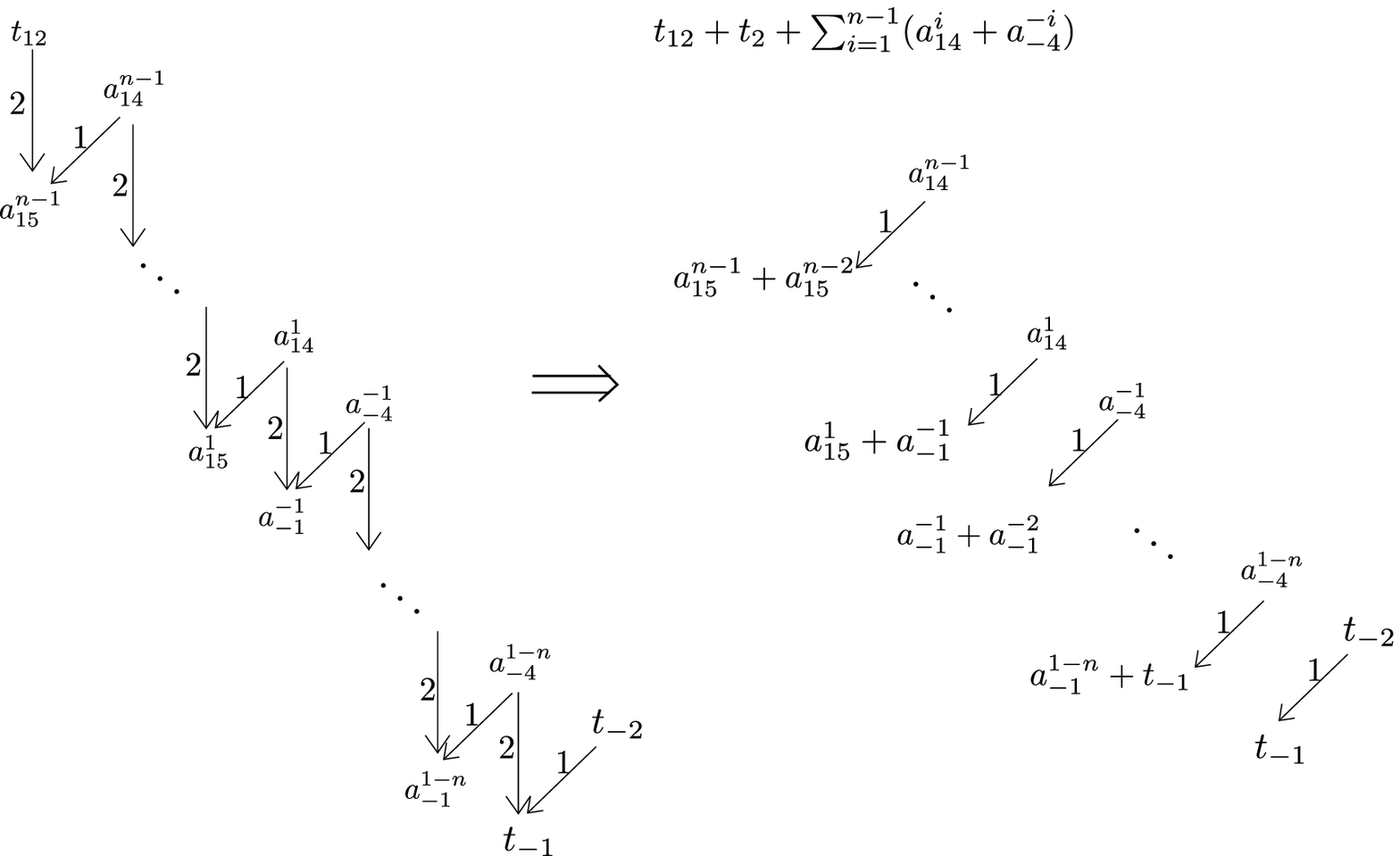}
\caption{The subcomplex containing $t_{12}$. The labels on the arrows indicate the lengths.}\label{Figure, the subcomplex containing t_12}
\end{center}
\end{figure}

\begin{table}[htb!]
\centering
{\renewcommand{\arraystretch}{1.2}
\begin{tabular}{|c|cc|c|c|cc|}
\cline{1-3} \cline{5-7}
           & A     & M      &  &          & A     & M       \\ \cline{1-3} \cline{5-7} 
$a^i_1$    & $i-1$ & $2i-3$ &  & c        & $0$   & $-2$    \\
$a^i_2$    & $i-1$ & $2i-4$ &  & $t_1$    & $n-1$ & $2n-3$  \\
$a^i_3$    & $i-2$ & $2i-5$ &  & $t_2$    & $n-2$ & $2n-4$  \\
$a^i_4$    & $i-2$ & $2i-4$ &  & $t_3$    & $n-2$ & $-3$    \\
$a^i_5$    & $i-1$ & $2i-3$ &  & $t_4$    & $n-1$ & $-2$    \\
$a^i_6$    & $i-1$ & $2i-4$ &  & $t_5$    & $n-1$ & $2n-3$  \\
$a^i_7$    & $i$   & $2i-3$ &  & $t_6$    & $n$   & $2n-2$  \\
$a^i_8$    & $i$   & $2i-2$ &  & $t_7$    & $n$   & $-1$    \\
$a^i_9$    & $i-1$ & $-3$   &  & $t_8$    & $n-1$ & $-2$    \\
$a^i_{10}$ & $i-1$ & $-2$   &  & $t_9$    & $n-1$ & $-2n-1$ \\
$a^i_{11}$ & $i$   & $-1$   &  & $t_{10}$ & $n$   & $-2n$   \\
$a^i_{12}$ & $i$   & $-2$   &  & $t_{11}$ & $n$   & $-1$    \\
$a^i_{13}$ & $i+1$ & $-1$   &  & $t_{12}$ & $n+1$ & $0$     \\
$a^i_{14}$ & $i+1$ & $0$    &  & $t_{13}$ & $n+1$ & $1-2n$  \\
$a^i_{15}$ & $i$   & $-1$   &  & $t_{14}$ & $n$   & $-2n$   \\
$a^i_{16}$ & $i$   & $-2$   &  &          &       &         \\ \cline{1-3} \cline{5-7} 
\end{tabular}
\vspace{5mm}
\caption{The Alexander and the Maslov grading of the intersection points in $R_1$ to $R_n$. Here $i=1,\ldots, n-1$. The gradings for the intersections in $R_{-1},\ldots, R_{-n}$ can be deduced from this table using symmetry:  If $x$ and $y$ are two intersection points symmetric about $c$, then $A(x)=-A(y)$ and $M(x)=M(y)-2A(y)$. }
\label{Table, Alexander and Maslov gradings of intersection points}
}
\end{table}
We can determine the Alexander grading and the Maslov grading of each intersection point. The relative grading differences are computed as in ordinary Lagrangian Floer chain complexes. (To slightly ease the tedious task of determining the relative gradings in our case, note if we fix a number $j\in\{1,\ldots,16\}$, then the relative grading difference between $a^i_j$ and $a^{i+1}_j$ is constant as $i$ varies.) We move to determine the absolute gradings. By symmetry of the knot Floer homology group, one can see the Alexander grading of $c$ is $0$ (see Page 31 of \cite{chen2019knot}). This fact together with the relative Alexander grading determine the absolute Alexander grading. We claim the Maslov grading of $t_{12}$ is $0$. In fact, the minimal subcomplex of $\widehat{C_\mathcal{R}}$ containing $t_{12}$ is shown in Figure \ref{Figure, the subcomplex containing t_12}, whose homology group is isomorphic to $\mathbb{F}$ and generated by the cycle $[t_{12}]+[a^{n-1}_{14}]+\cdots +[a^{1}_{14}]+[a^{-1}_{-4}]+\cdots +[a^{-n+1}_{-4}]+[t_{-2}]$. As $H_*(\widehat{C_\mathcal{R}})$ is isomorphic to $\mathbb{F}$ and is supported in Maslov grading $0$, we know $[t_{12}]+[a^{n-1}_{14}]+\cdots +[a^{1}_{14}]+[a^{-1}_{-4}]+\cdots +[a^{-n+1}_{-4}]+[t_{-2}]$ generates $H_*(\widehat{C_\mathcal{R}})$ and hence $M(t_{12})=0$. $M(t_{12})$ together with the relative Maslov grading determine the absolute Maslov grading. We list the gradings of the intersection points in $R_1$ to $R_n$ in Table \ref{Table, Alexander and Maslov gradings of intersection points}. The gradings for intersection points in $R_{-1}$ to $R_{-n}$ can be deduced from Table \ref{Table, Alexander and Maslov gradings of intersection points} by symmetries: $A(a^{-i}_{-j})=-A(a^i_j)$ and $M(a^{-i}_{-j})=M(a^i_j)-2A(a^i_j)$ for $i=1,\ldots,n-1$ and $j=1,\ldots,16$. $A(t_{-j})=-A(t_j)$ and $M(t_{-j})=M(t_{j})-2A(t_{j})$ for $j=1,\ldots,14$. 

Lemma \ref{Lemma, length of vertical arrows over simplied basis} (1) claims that over a vertically simplified basis the vertical arrows with terminals of Alexander grading $-n-1$ and Maslov index $-2n-2$ are of length $1$. To see this, note there is only one intersection point with Alexander grading $-n-1$ and Maslov index $-2n-2$; this is $t_{-12}$. The minimal subcomplex of $\widehat{C_{\mathcal{R}}}$ containing arrows to or from $t_{-12}$ is $t_{-11}\rightarrow t_{-12}$ and the length of this arrow is $1$. 

\begin{figure}[htb!]
\begin{center}
\includegraphics[scale=0.55]{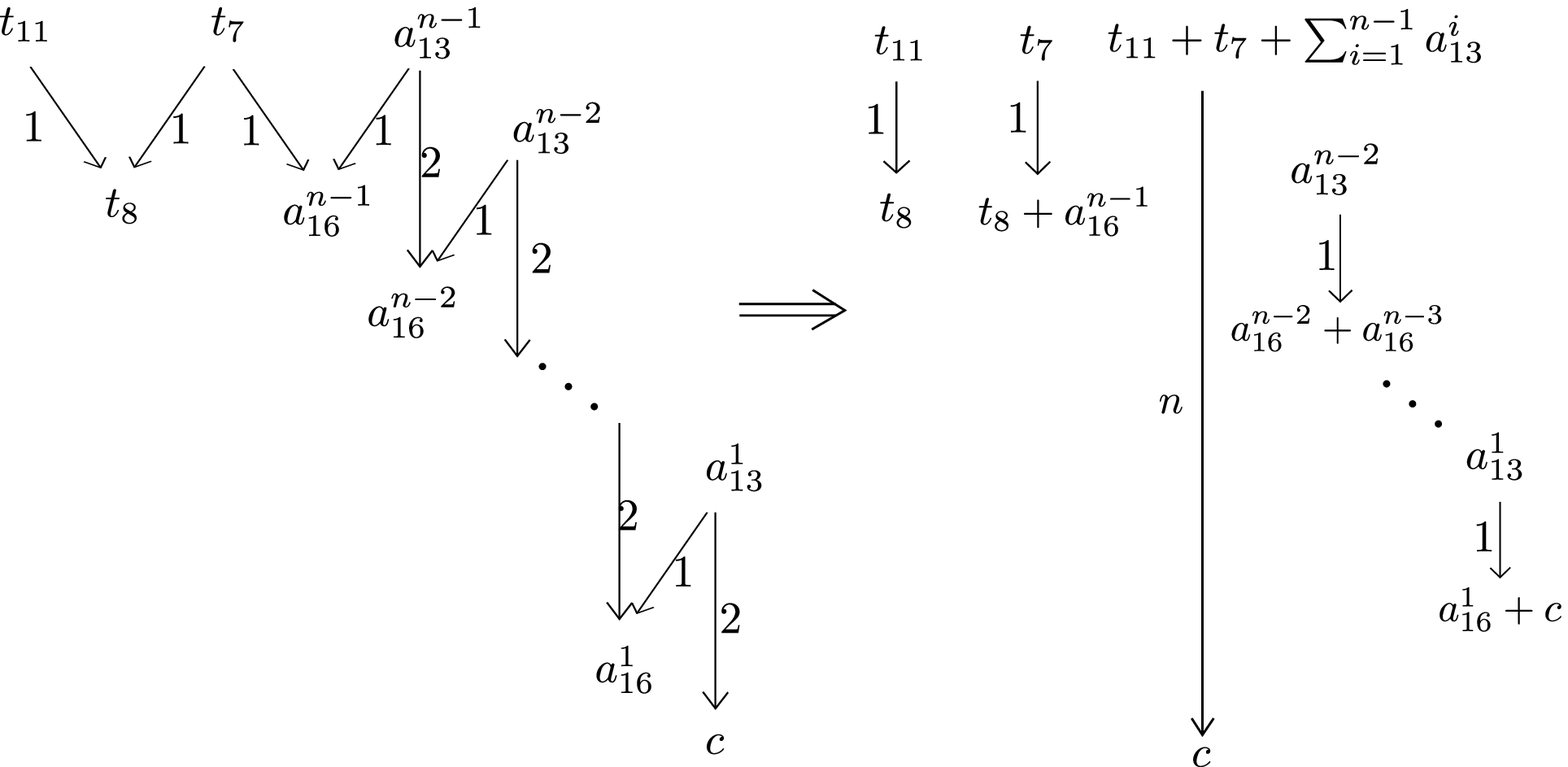}
\caption{}\label{Figure, subcomplex for (2)}
\end{center}
\end{figure}

Lemma \ref{Lemma, length of vertical arrows over simplied basis} (2) claims over a vertically simplified basis the vertical arrows with initials of Alexander grading $n$ and Maslov grading $-1$ are either of length $1$ or $n$, and there is only one such arrow of length $n$. The intersection points with Alexander grading $n$ and Maslov grading $-1$ are $t_{11}$, $t_7$, and $a^{n-1}_{13}$. The minimal subcomplex(es) of $\widehat{C_\mathcal{R}}$ which contain differentials initiating from these intersection points is (are) as shown in Figure \ref{Figure, subcomplex for (2)} (left). The claim is obvious after a filtered change of basis Figure \ref{Figure, subcomplex for (2)} (right).

\begin{figure}[htb!]
\begin{center}
\includegraphics[scale=0.55]{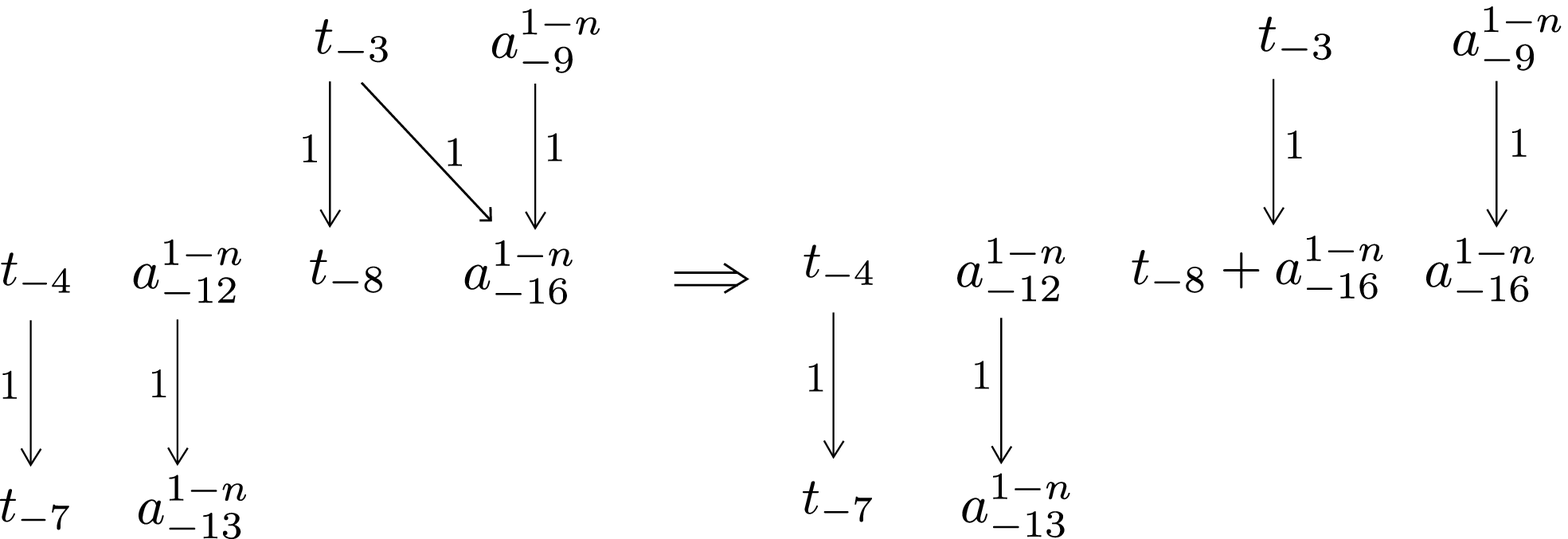}
\caption{}\label{Figure, subcomplex for (3)}
\end{center}
\end{figure}

Lemma \ref{Lemma, length of vertical arrows over simplied basis} (3) claims that the vertical arrows with initials or terminals of Alexander grading $-n+1$ and Maslov grading $-2n$ are of length $1$. Note the intersection points with Alexander grading $-n+1$ and Maslov grading $-2n$ are $t_{-4}$, $t_{-8}$, $a^{-n+1}_{-12}$, and $a^{-n+1}_{-16}$. The minimal subcomplexes of $\widehat{C_\mathcal{R}}$ containing these intersection points are shown in Figure \ref{Figure, subcomplex for (3)} (left). The claim is obvious after a filtered change of basis \ref{Figure, subcomplex for (3)} (right).

Lemma \ref{Lemma, length of vertical arrows over simplied basis} (4) claims there are no vertical arrows with initials of Alexander grading $n-2$ and Maslov grading $-3$, and the vertical arrows with terminals of Alexander grading $n-2$ and Maslov grading $-3$ are of length $1$.  Note the intersection points with Alexander grading $n-2$ and Maslov grading $-3$ are $t_{3}$ and $a^{n-1}_{9}$. The minimal subcomplexes containing these intersection points are $t_{4}\rightarrow t_{3}$ and $a^{n-1}_{12}\rightarrow a^{n-1}_9$, where both of the arrows are of length $1$. The claim follows.  
\begin{figure}[htb!]
\begin{center}
\includegraphics[scale=0.7]{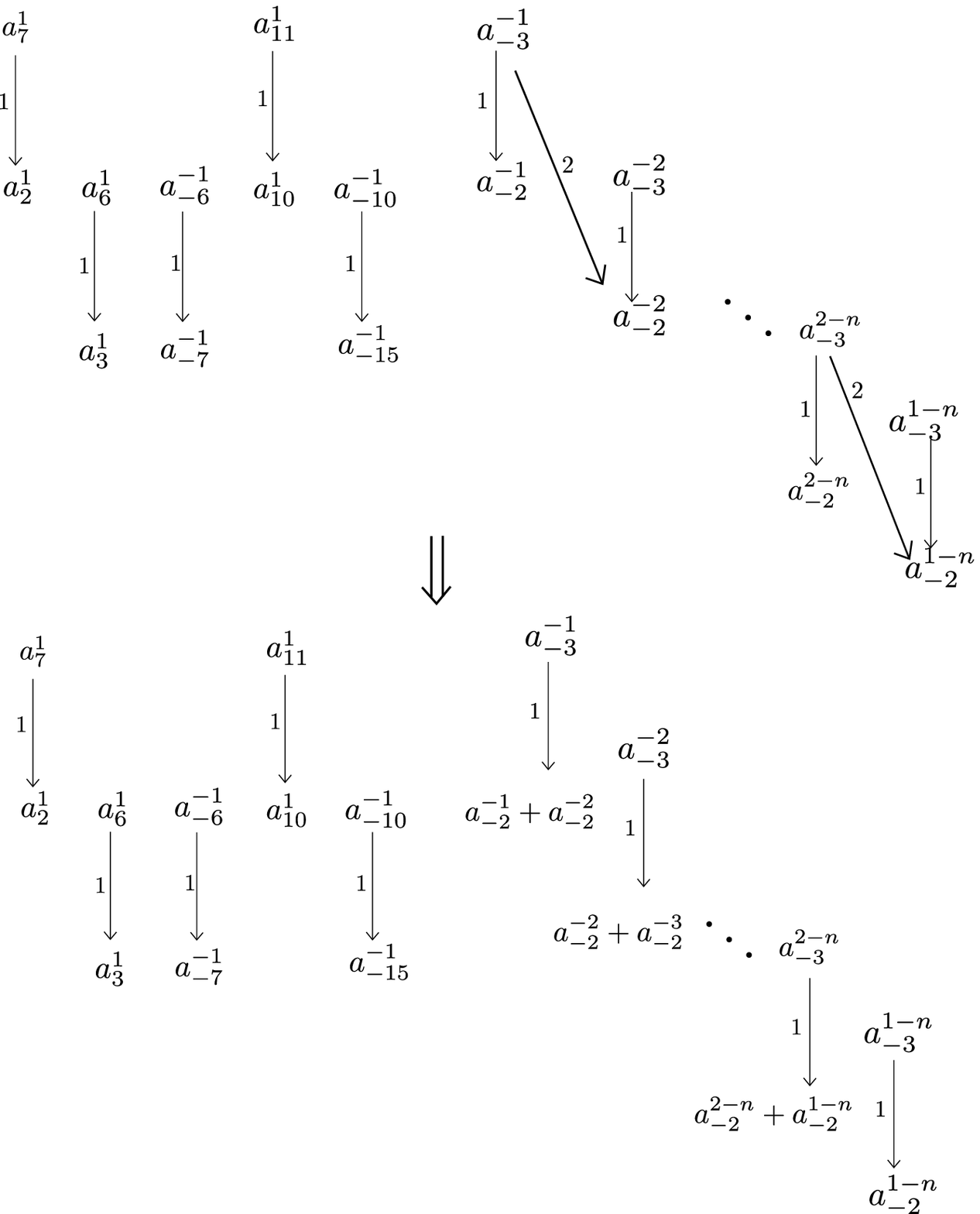}
\caption{}\label{Figure, subcomplex for (5)}
\end{center}
\end{figure}

Lemma \ref{Lemma, length of vertical arrows over simplied basis} (5) claims that over a vertically simplified basis the vertical arrows with terminals of Alexander grading $0$ and Maslov grading $-2$ are either of length $1$ or $n$, and the vertical arrows with initials of Alexander grading $0$ and Maslov grading $-2$ are of length $1$. To see this, note the intersection points with Alexander grading $0$ and Maslov grading $-2$ are $a^1_2$, $a^1_6$, $a^1_{10}$, $c$, $a^{-1}_{-2}$, $a^{-1}_{-6}$, and $a^{-1}_{-10}$. The minimal subcomplexes of $\widehat{C_\mathcal{R}}$ which contain differentials ending at $c$ appears in Figure \ref{Figure, subcomplex for (2)}. The minimal subcomplexes containing differential ending at the other intersection points are as shown in Figure \ref{Figure, subcomplex for (5)}. Claim (5) can be seen after a filtered change of basis.

Lemma \ref{Lemma, length of vertical arrows over simplied basis} (6) claims over a vertically simplified basis the vertical arrows with terminals of Alexander grading $1$ and Maslov grading $-1$ are of length $1$. The intersection points with Alexander grading $1$ and Maslov grading $-1$ are $a^{1}_{7}$, $a^{1}_{11}$, and $a^{1}_{15}$. The minimal subcomplexes of $\widehat{C_\mathcal{R}}$ containing $a^{1}_{7}$ and $a^{1}_{11}$ are shown in Figure \ref{Figure, subcomplex for (5)} and they do not admit incoming arrows. The subcomplex containing $a^{1}_{15}$ is shown in \ref{Figure, the subcomplex containing t_12} and the claim can be seen after a filtered change of basis. 

Lemma \ref{Lemma, length of vertical arrows over simplied basis} (7) claims over a vertically simplified basis there are no vertical arrows with initials of Alexander grading $-2$ and Maslov grading $-4$, and the vertical arrows with terminals of Alexander grading $-2$ and Maslov grading $-4$ are of length $1$. Note there is only one intersection point of Alexander grading $-2$ and Maslov grading $-4$; this is $a^{-1}_{-14}$. The claim follows readily from that the minimal subcomplex of $\widehat{C_\mathcal{R}}$ containing arrows to or from $a^{-1}_{-14}$ is $a^{-1}_{-11}\rightarrow a^{-1}_{-14}$, where the arrow has length $1$. 

Lemma \ref{Lemma, length of vertical arrows over simplied basis} (8) claims that over a vertically simplified basis the vertical arrows with initials or terminals of Alexander grading $-1$ and Maslov grading $-3$ are of length $1$. To see this, note the intersection points with Alexander grading $-1$ and Maslov grading $-3$ are $a^{1}_{3}$, $a^{-1}_{-7}$, $a^{-1}_{-11}$, and $a^{-1}_{-15}$. The minimal subcomplexes of $\widehat{C_\mathcal{R}}$ which contain $a^{1}_{3}$, $a^{-1}_{-7}$, and $a^{-1}_{-15}$ are already shown in Figure \ref{Figure, subcomplex for (5)}. The minimal subcomplex involving $a^{-1}_{-11}$ is observed in the previous paragraph. Claim (8) can be read off from these subcomplexes. 

Lemma \ref{Lemma, length of vertical arrows over simplied basis} (9) claims that over a vertically simplified basis there are no vertical arrows with terminals of Alexander grading $2$ and Maslov grading $0$, and the vertical arrows with initials of Alexander grading $2$ and Maslov grading $0$ are of length $1$. To see this, note the only intersection point of Alexander grading $2$ and Maslov grading $0$ is $a^{1}_{14}$. The minimal subcomplex of $\widehat{C_\mathcal{R}}$ which contain differentials initiating from $a^{1}_{14}$ is shown in Figure \ref{Figure, the subcomplex containing t_12}. Claim (4) can be seen after a filtered change of basis.

\end{proof}
 
\bibliographystyle{abbrv}
\bibliography{satellite}
\end{document}